\pgfplotsset{compat=newest}
\pgfplotsset{plot coordinates/math parser=false}       
\definecolor{lred}{RGB}{200,0,0}
\definecolor{dred}{RGB}{130,0,0} \definecolor{dblu}{RGB}{0,0,130}
\definecolor{dgre}{RGB}{0,130,0} \definecolor{dgra}{RGB}{50,50,50}
\definecolor{mgra}{RGB}{100,100,100}
\definecolor{lgra}{RGB}{220,220,220}
\definecolor{MPG}{RGB}{000,125,122}
\definecolor{ora}{HTML}{FF9933}
\definecolor{AMPurple}{HTML}{663366}
\definecolor{Burgundy}{HTML}{993333}
\definecolor{Coffee}{HTML}{7B6049}
\definecolor{ForestGreen}{HTML}{005826}
\definecolor{Lavender}{HTML}{6E6AB1}
\definecolor{PSLightBlue}{HTML}{7DA7D9}
\newcommand{\g}{\,|\,}
\newcommand{\Exp}{\mathbb{E}}
\newcommand{\Cov}{\operatorname{cov}}
\newcommand{\spa}{\operatorname{span}}
\renewcommand{\Re}{\mathbb{R}}
\newcommand{\N}{\mathcal{N}} 
\newcommand{\cS}{\mathcal{S}}
\newcommand{\Trans}{^{\intercal}}
\newcommand{\superimpose}[2]{%
  {\ooalign{$#1\@firstoftwo#2$\cr\hfil$#1\@secondoftwo#2$\hfil\cr}}}
\newcommand{\ostimes}{\mathpalette\superimpose{{\otimes}{\ominus}}}
\newcommand{\q}{\quad}
\newcommand{\qq}{\qquad}
\newcommand{\qqqq}{\qquad\qquad}
\renewcommand{\vec}{\boldsymbol} 
\newcommand{\vect}[1]{\overrightarrow{#1}}
\newcommand{\Id}{\vec{I}}
\newcommand{\tr}{\operatorname{tr}}
\newcommand{\rk}{\operatorname{rk}}
\newcommand{\ce}{\colonequals}
\newcommand{\ec}{\equalscolon}
\tikzset{>=stealth'} 
\tikzstyle{graphnode} = 
\tikzstyle{var}   =[graphnode,fill=white]
\tikzstyle{obs}   =[graphnode,fill=black,text=white]
\tikzstyle{fac}   =[rectangle,draw=black,fill=black!25,minimum size=5pt]
\tikzstyle{facprior} =[rectangle,draw=black,fill=black,text=white,minimum size=5pt]
\tikzstyle{edge}  =[draw=white,double=black,thick,-]
\tikzstyle{prior} =[rectangle, draw=black, fill=black, minimum size=
\tikzstyle{dirprior} = [circle, draw=black, fill=black, minimum
\DeclareSymbolFont{stmry}{U}{stmry}{m}{n}
\DeclareMathSymbol\leftarrowtriangle\mathrel{stmry}{"5E}
\DeclareMathSymbol\rightarrowtriangle\mathrel{stmry}{"5F}
\DeclareMathSymbol\leftrightarrowtriangle\mathrel{stmry}{"5D}
\renewcommand{\to}{\operatorname*{\rightarrowtriangle}}
\renewcommand{\leftrightarrow}{\operatorname*{\leftrightarrowtriangle}}
\newif\iffinal 
 \newcommand{%
   \beginpgfgraphicnamed{-external}%
   \input{.tikz}%
   \endpgfgraphicnamed%
 }[1]{%
   \input{#1.tikz}%
 }
 \newcommand{%
   \beginpgfgraphicnamed{-external}%
   \input{.tikz}%
   \endpgfgraphicnamed%
 }[1]{%
   \beginpgfgraphicnamed{#1-external}%
   \input{#1.tikz}%
   \endpgfgraphicnamed%
 }
\newcounter{PHcomment}
\title{Probabilistic Interpretation of Linear Solvers}
\author{Philipp Hennig%
  \thanks{Max Planck Institute for Intelligent Systems,
    Spemannstra{\ss}e 38, 72076 T\"ubingen, Germany.}}
\begin{document}

\maketitle

\begin{abstract}
  This manuscript proposes a probabilistic framework for algorithms that
  iteratively solve unconstrained linear problems $Bx = b$ with positive
  definite $B$ for $x$. The goal is to replace the point estimates returned by
  existing methods with a Gaussian posterior belief over the elements of the
  inverse of $B$, which can be used to estimate errors. Recent probabilistic
  interpretations of the secant family of quasi-Newton optimization algorithms
  are extended. Combined with properties of the conjugate gradient algorithm,
  this leads to uncertainty-calibrated methods with very limited cost overhead
  over conjugate gradients, a self-contained novel interpretation of the
  quasi-Newton and conjugate gradient algorithms, and a foundation for new
  nonlinear optimization methods.
\end{abstract}

\begin{keywords}
  linear programming, quasi-Newton methods, conjugate gradient, Gaussian inference
\end{keywords}

\begin{AMS}
  49M15, 65K05, 60G15
\end{AMS}

\pagestyle{myheadings}
\thispagestyle{plain}
\markboth{PHILIPP HENNIG}{PROBABILISTIC INTERPRETATION OF LINEAR SOLVERS}

\section{Introduction}
\label{sec:introduction}

\subsection{Motivation}
\label{sec:motivation}
Solving the unconstrained linear problem of finding $x$ in 
\begin{equation}
  \label{eq:5}
  Bx=b \qq\text{with symmetric, positive definite $B\in\Re^{N\times N}$ and $b\in\Re^N$}
\end{equation}
is a basic task for computational linear algebra. It is equivalent to
minimizing the quadratic $f(x)=\nicefrac{1}{2}x\Trans B x - x\Trans b$, with
gradient $F(x)=\nabla_x f(x)=Bx - b$ and constant Hessian $B$. If $N$ is too
large for exact solution, iterative solvers such as the method of conjugate
gradients \cite{hestenes1952methods} (CG) are widely applied. These methods
produce a sequence of estimates $\{x_i\}_{i=0,\dots,M}$, updated by evaluating
$F(x_i)$. The question addressed here is: Assume we run an iterative solver for
$M<N$ steps. How much information does doing so provide about $B$ and its
(pseudo-) inverse $H$? If we had to give estimates for $B$, $H$, and for the
solution to related problems $B\tilde{x}=\tilde{b}$, what should they be, and
how big of an `error bar' (a joint posterior distribution) should we put on
these estimates? The gradient $F(x_i)$ provides an error residual on $x_i$, but
not on $B,H$ and $\tilde{x}$.

It will turn out that a family of quasi-Newton methods (\textsection
\ref{sec:dennis-family-secant}), more widely used to solve \emph{non}linear
optimization problems, can help answer this question, because classic
derivations of these methods can be re-formulated and extended into a
probabilistic interpretation of these methods as maxima of Gaussian posterior
probability distributions (\textsection \ref{sec:gauss-infer-from}). The
covariance of these Gaussians offers a new object of interest and provides
error estimates (\textsection \ref{sec:choice-hyperp}). Because there are
entire linear spaces of Gaussian distributions with the same posterior mean but
differing posterior error estimates, selecting one error measure consistent
with the algorithm is a new statistical estimation task (\textsection
\ref{sec:making-use-posterior}).

\subsection{The Dennis family of secant methods}
\label{sec:dennis-family-secant}

The family of \emph{secant} update rules for an approximation to the
Newton-Raphson search direction is among the most popular building blocks for
continuous nonlinear programming. Their evolution chiefly occurred from the
late 1950s \cite{Davidon1959} to the 1970s, and is widely understood to be
crowned by the development of the BFGS rule due to Broyden
\cite{broyden1969new}, Fletcher \cite{fletcher1970new}, Goldfarb
\cite{goldfarb1970family} and Shanno \cite{shanno1970conditioning}, which now
forms a core part of many contemporary optimization methods. But the family
also includes the earlier and somewhat less popular DFP rule of Davidon
\cite{Davidon1959}, Fletcher and Powell \cite{fletcher1963rapidly}; the
Greenstadt \cite{greenstadt1970variations} rule, and the so-called symmetric
rank-1 method \cite{Davidon1959, broyden1967quasi}. Several authors have
proposed grouping these methods into broader classes, among them Broyden in
1967 \cite{broyden1967quasi} (subsequently refined by Fletcher
\cite{fletcher1970new}) and Davidon in 1975 \cite{davidon75}. Of particular
interest here will be a class of updates formulated in 1971 by Dennis
\cite{dennis71:_broyd}, which includes all the specific rules cited above. It
is the class of update rules mapping a current estimate $B_0$ for the Hessian,
a vector-valued pair of observations $y_i=F(x_{i})-F(x_{i-1})\in\Re^N$ and
$s_i=x_i-x_{i-1}$ with $y_i=Bs_i$, into a new estimate $B_i$ of the form
\begin{equation}
  \label{eq:1}
  B_{i+1} = B_{i} + \frac{(y_i-B_{i}s_i)c_i\Trans + c_i(y_i-B_{i}s_i)\Trans}{c_i\Trans s_i} -
  \frac{c_i s_i\Trans(y_i-B_is_i) c_i\Trans}{(c_i\Trans s_i)^2}\qquad\text{for some } c_i\in\Re^N.
\end{equation}
This ensures the secant relation $y_i=B_{i+1}s_i$, sometimes called `the
quasi-Newton Equation' \cite{dennis1977quasi}. Convergence of the sequence of
iterates $x_i$ for various members of this class (and the classes of Broyden
and Davidon) are well-understood \cite{gay1979some,dennis1981convergence}. The
rules named above can be found in the Dennis class as
\cite{r.88:_local_super_conver_struc_secan}:
\begin{xalignat}{2}
  \label{eq:17} &\text{Symmetric Rank-1 (SR1)} & c&=y-B_0s\\
  \label{eq:18} &\text{Powell Symmetric Broyden \cite{Powell1970}} & c&=s\\
  \label{eq:19} &\text{Greenstadt \cite{greenstadt1970variations}} & c&=B_0s\\
  \label{eq:20} &\text{Davidon Fletcher Powell (DFP)} & c&=y\\
  \label{eq:21} &\text{Broyden Fletcher Goldfarb Shanno (BFGS)} & c&=y +
  \sqrt{\frac{y\Trans s}{s\Trans B_0s}} B_0s
\end{xalignat}

\paragraph{Inverse updates}
\label{sec:inverse-updates}

Because the update of Equation (\ref{eq:1}) is of rank 2, the corresponding
estimate for the inverse $H=B^{-1}$ (assuming it exists) can be constructed
using the matrix inversion lemma. Alternatively, all Dennis rules can also be
used as \emph{inverse updates} \cite{dennis1977quasi}, i.e. estimates for $H$
itself, by exchanging $s\leftrightarrow y$ and $B\leftrightarrow H$, $B_0
\leftrightarrow H_0$ above (corresponding to the secant relation $s =
Hy$). Interestingly, the DFP and BFGS updates are duals of each other under
this exchange \cite{dennis1977quasi}: The inverse of $B_1$ as constructed by
the DFP rule (\ref{eq:20}) equals the $H_1$ arising from the inverse BFGS rule
(\ref{eq:21}). This does not mean BFGS and DFP are the same, but only that they
fill opposing roles in the inverse and direct formulation. To avoid confusion,
in this text the DFP rule will always be used in the sense of a direct update
(estimating $B$, with $c=y$), and the BFGS rule in the inverse sense
(i.e. estimating $H$, with $c=s$). The first parts of this text will focus on
direct updates and thus mostly talk about the DFP method instead of the BFGS
rule. All results extend to the inverse models (and thus BFGS) under the
exchange of variables mentioned above. Sections \ref{sec:struct-gram-matr} and
\ref{sec:making-use-posterior} will make some specific choices geared to
inverse updates. They will then talk explicitly about BFGS, always in the sense
of an inverse update.

\paragraph{Towards probabilistic quasi-Newton methods}
\label{sec:towards-prob-quasi}

This text gives a probabilistic interpretation of the Dennis family, for the
linear problems of Eq.~(\ref{eq:5}). We will interpret the secant methods as
\emph{estimators} of (inverse) Hessians of an objective function, and ask what
kind of prior assumptions would give rise to these specific estimators. This
results in a self-contained derivation of inference rules for symmetric
matrices. Some of the rules quoted above can be motivated as `natural' from the
inference perspective.

Another major strand of nonlinear optimization methods extends from the
conjugate gradient algorithm of Hestenes \& Stiefel \cite{hestenes1952methods}
for linear problems, nonlinearly extended by Fletcher and Reeves
\cite{fletcher1964function} and others. On linear problems, the CG and
quasi-Newton ideas are closely linked: Nazareth \cite{nazareth1979relationship}
showed that CG is equivalent to BFGS for linear problems (with exact line
searches, when the initial estimate $B_0=\Id$). More generally, Dixon
\cite{dixon1972quasi,dixon1972quasiII} showed for quasi-Newton methods in the
Broyden class (which also contains the methods listed above) that, under exact
line searches and the same starting point, all methods in Broyden's class
generate a sequence of points identical to CG, if the starting matrix $B_0$ is
taken as a preconditioner of CG. In this sense, this text also provides a novel
derivation for conjugate gradients, and will use several well-known properties
of that method. Implications of the results presented herein to nonlinear
variants of conjugate gradients will be left for future work.

\subsection{Numerical methods perform inference --- The value of a statistical
  interpretation}
\label{sec:numer-meth-perf}

The defining aspect of quasi-Newton methods is that they
approximate---\emph{estimate}---the Hessian matrix of the objective function,
or its inverse, based on evaluations---\emph{observations}---of the objective's
gradient and certain \emph{prior} structural restrictions on the estimate. They
can therefore be interpreted as inferring the latent quantity $B$ or $H$ from
the observed quantities $s,y$. This creates a connection to statistics and
probability theory, in particular the probabilistic framework of encoding prior
assumptions in a probability measure over a hypothesis space, and describing
observations using a likelihood function, which combines with the prior
according to Bayes' theorem into a posterior measure over the hypothesis space
(\textsection\ref{sec:gauss-infer-from}).

On the one hand, this elucidates prior assumptions of quasi-Newton methods
(\textsection\ref{sec:choice-hyperp}). On the other, it suggests new
functionality for the existing methods, in particular error estimates on $B$
and $H$ (\textsection\ref{sec:making-use-posterior}). In future work, it may
also allow for algorithms robust to `noisy' linear maps, such as they arise in
physical inverse problems.

The interpretation of numerical problems as estimation was pointed out by
statisticians like Diaconis in 1988 \cite{diaconis88:_bayes}, and O'Hagan in
1992 \cite{ohagan92:_some_bayes_numer_analy}, well after the introduction of
quasi-Newton methods. To the author's knowledge, the idea has rarely attracted
interest in numerical mathematics, and has not been studied in the context of
quasi-Newton methods before recent work by Hennig \& Kiefel
\cite{HennigKiefel,hennig13:_quasi_newton_method}. An argument sometimes raised
against analysing numerical methods probabilistically is that numerical
problems do not generally feature an aspect of randomness. But probability
theory makes no formal distinction between epistemic uncertainty, arising from
lack of knowledge, and aleatoric uncertainty, arising from `randomness',
whatever the latter may be taken to mean precisely.  Randomness is not a
prerequisite for the use of probabilities.  Those who do feel uneasy about
applying probability theory to unknown deterministic quantities, however, may
prefer another, perhaps more subjective argument: From the point of view of a
numerical algorithm's designer, the `population' of problems that practitioners
will apply the algorithm to does in fact form a probability distribution from
which tasks are `sampled'.

Numerical algorithms running on a finite computational budget make numerical
errors. A notion of the imprecision of this answers is helpful, in particular
when the method is used within a larger computational framework. Explicit error
estimates can be propagated through the computational pipeline, helping
identify points of instability, and to distribute or save computational
resources.  Needless to say, it makes no sense to ask for the \emph{exact}
error (if the exact difference between the true and estimated answer where
known, the exact answer would be known, too). But it is meaningful to ask for
the remaining volume of hypotheses consistent with the computations so
far. This paper attempts to construct such an answer for linear problems.

\subsection{Overview of main results}
\label{sec:overv-main-results}

As pointed out above, although quasi-Newton methods are most popular for
nonlinear optimization, here the focus will be on \emph{linear}
problems. Extending the probabilistic interpretation constructed here to the
nonlinear setting of inferring the (inverse) Hessian of a function $f$ will be
left for future work (see \cite{hennig13:_quasi_newton_method} for
pointers). The present aim is an iterative linear solver iterating through
posterior beliefs $\{p_t(x,H)\}_{t=1,\dots,M}$ for $H=B^{-1}$ and the solution
$x=Hb$ of the linear problem. These beliefs will be constructed as Gaussian
densities $p_t(H)=\N(H;H_t,V_t)$ over the elements\footnote{For notational
  convenience, the elements of $H$ will be treated as the elements of a vector
  of length $N^2$, see more at the beginning of
  \textsection\ref{sec:infer-asymm-matr}.}  of $H$, with mean $H_t$ and
covariance matrix $V_t$.

The results in this paper significantly clarify and extend previous results by
Hennig \& Kiefel \cite{hennig13:_quasi_newton_method} and Hennig
\cite{StochasticNewton}. Here is a brief outlook of the main results:

\begin{description}
\item[Dennis family derived in a symmetric hypothesis class
  (\textsection\ref{sec:gauss-infer-from})] As a probabilistic interpretation
  of results by Dennis \& Mor\'e \cite{dennis1977quasi} and Dennis \& Schnabel
  \cite{dennis1979least}, Hennig \cite{StochasticNewton} provided a derivation
  of rank-2 secant methods in terms of two independent observations of two
  separate parts of the Hessian. This viewpoint affords a nonparametric
  extension to nonlinear optimization, but is not particularly elegant. This
  paper provides a cleaner derivation: the Dennis family can in fact be derived
  naturally from a prior over only symmetric matrices. This extends the results
  of Dennis \& Schnabel \cite{dennis1979least}, from statements about the
  maximum of a Frobenius norm in the space of symmetric matrices to the entire
  structure of that norm in that space.
\item[New interpretation for SR1, Greenstadt, DFP \& BFGS updates
  (\textsection\ref{sec:choice-hyperp})]~\\
  The choice of prior parameters distinguishes between the members of the
  Dennis family. An analysis shows that DFP and BFGS are `more correct' than
  other members of the family in the sense that they are consistent with exact
  probabilistic inference for the entire run of the algorithm, while general
  Dennis rules are only consistent after the first step (Lemmas
  \ref{lem:struct-gram-matr-1} and \ref{lem:choice-hyperp-1}). Further, SR1,
  Greenstadt, DFP and BFGS all use different prior measures that, although all
  `scale-free', give imperfect notions of calibration for the prior
  measure. Finally, because BFGS is equivalent to CG
  (\cite{nazareth1979relationship} and Lemma \ref{sec:struct-gram-matr-1}), its
  set of evaluated gradients is orthogonal. This allows a computationally
  convenient parameterization of posterior uncertainty. Overall, the picture
  arising is that, from the probabilistic perspective, the DFP and particularly
  BFGS methods have convenient numerical properties, but their posterior
  measure can be calibrated better.
\item[Posterior uncertainty by parameter estimation (\textsection
  \ref{sec:making-use-posterior})] It will transpire that the decision for a
  specific member of the Dennis family still leaves a space of possible choices
  of prior covariances consistent with this update rule. Constructing a
  meaningful posterior uncertainty estimate (covariance) on $H$ after finitely
  many steps requires a choice in this unidentified space, which, as in other
  estimation problems, needs to be motivated based on some notion of regularity
  in $H$. Several possible choices are discussed in Section
  \ref{sec:choice-hyperp}, all of which add very low overhead to the standard
  conjugate gradient algorithm.
\end{description}

\section{Gaussian inference from matrix-vector multiplications}
\label{sec:gauss-infer-from}

\subsection{Introduction to Gaussian inference}
\label{sec:intr-gauss-infer}

Gaussian inference---probabilistic inference using both a Gaussian prior and a
Gaussian likelihood---is one of the best-studied areas of probabilistic
inference. The following is a very brief introduction; more can be
found in introductory texts \cite{RasmussenWilliams,mackay1998introduction}.
Consider a hypothesis class consisting of elements of the $D$-dimensional real
vector space, $v\in\Re^D$, and assign a Gaussian prior probability density over
this space:
\begin{equation}
  \label{eq:2}
  p(v) = \N(v;\mu,\Sigma) \ce \frac{1}{(2\pi)^{D/2}|\Sigma|^{1/2}}
  \exp\left(-\frac{1}{2}(v-\mu)\Trans\Sigma^{-1}(v-\mu) \right),
\end{equation}
parametrised by \emph{mean vector} $\mu\in\Re^D$ and positive definite
\emph{covariance matrix} $\Sigma\in\Re^{D\times D}$. If we now observe a linear
mapping $A\Trans v+a=y\in\Re^M$ of $v$, up to Gaussian uncertainty of
covariance $\Lambda\in\Re^{M\times M}$, i.e. according to the likelihood
function
\begin{equation}
  \label{eq:3}
  p(y\g A,a,v) = \N(y; A\Trans v + a, \Lambda),
\end{equation}
then, by Bayes' theorem and a simple linear computation (see
e.g. \cite[\textsection2.1.2]{RasmussenWilliams}), the posterior, the unique
distribution over $v$ consistent with both prior and likelihood, is
\begin{equation}
  \label{eq:4}
  p(v\g y,A,a) = \N[v; \mu + \Sigma A(A\Trans \Sigma A +
  \Lambda)^{-1}(y-A\Trans \mu - a), \Sigma - \Sigma A(A\Trans \Sigma A +
  \Lambda)^{-1} A\Trans \Sigma].
\end{equation}
This derivation also works in the limit of perfect information, i.e. for a
well-defined limit of $\Lambda\to 0$, in which case\footnote{If $A$ is not of
  maximal rank, a precise formulation requires a projection of $y$ into the
  preimage of $A$. This is merely a technical complication. It is circumvented
  here by assuming, later on, that line-search directions are linearly
  independent. This amounts to a maximal-rank $A$.} the likelihood converges to
the Dirac distribution $p(y\g A,a,v)\to\delta(y-A\Trans v - a)$. The crucial
point is that constructing the posterior after linear observations involves
only linear algebraic operations, with the posterior covariance (the `error
bar') using many of the computations also required to compute the mean (the
`best guess').

Gaussian inference is closely linked to least-squares estimation: Because the
logarithm is concave, the maximum of the posterior (\ref{eq:4}) (which equals
the mean) is also the minimizer of the quadratic norm (using $\|x\|_K ^2 \ce
x\Trans K^{-1} x.$)
\begin{equation}
  \label{eq:6}
  -2\log p(v\g y,A,a) = \|y-A\Trans v - a\|_\Lambda ^2 + \|v-\mu\|_{\Sigma} ^2
  + \const.
\end{equation}
The added value of the probabilistic interpretation is embodied in the
posterior covariance, which quantifies remaining degrees of freedom of the
estimator, and can thus also be interpreted as a measure of uncertainty, or
estimated error.

\subsection{Inference on asymmetric matrices from matrix vector
  multiplications}
\label{sec:infer-asymm-matr}

We now consider Gaussian inference in the specific context of iterative solvers
for linear problems as defined in Eq.~(\ref{eq:5}). Our solver shall maintain a
current probability density estimate, either $p_i(B)$ for $p_i(H)$,
$i=0,\dots,M$. The solver does not have direct access to $B$ itself, but only
to a function mapping $s\to Bs$, for arbitrary $s\in\Re^N$.

It is possible to use the Gaussian inference framework in the context of secant
methods \cite{hennig13:_quasi_newton_method} through the use of Kronecker
algebra: We write the elements of $B$ as a vector $\vect{B}\in\Re^{N^2}$,
indexed as $\vect{B}_{ij}$ by the matrix' index set\footnote{In the notation
  used here, this vector is assumed to be created by stacking the elements of
  $B$ row after row into a column vector. An equivalent column-by-column
  formulation is also widely used. In that formulation, some of the formulae
  below are permuted.}  $(i,j)\in\Re\times\Re$. The Kronecker product provides
the link between such `vectorized matrices' and linear operations
(e.g. \cite{loan00:_kronec}). The Kronecker product $A\otimes C$ of two
matrices $A\in\Re^{M_a\times N}$ and $C\in\Re^{M_c\times N}$ is the
$M_aM_c\times N^2$ matrix with elements $(A\otimes C)_{(ij)(k\ell)} =
A_{ik}C_{j\ell}$. It has the property $(A\otimes C)\vect{B} =
\vect{ABC\Trans}$. Thus, $\vect{BS}$ can be written as $(\Id\otimes
S)\vect{B}$, which allows incorporating the kind of observations made by an
iterative solver in a Gaussian inference framework, according to the following
Lemma.
\begin{lemma}[proof in Hennig \& Kiefel, 2013]
  \label{lem:infer-asymm-matr-1}
  Given a Gaussian prior over a general quadratic matrix $\vect{B}$, with prior
  mean $\vect{B}_0$ and a prior covariance with Kronecker structure, $p(B) =
  \N(\vect{B};\vect{B}_0,W\otimes W)$, the posterior mean after observing
  $BS=Y\in\Re^{N\times M}$ (i.e. $M$ projections along the line-search
  directions $S\in\Re^{N\times M}$) is
  \begin{equation}
    \label{eq:7}
    B_M = B_0 + (Y-B_0S)(S\Trans W S)^{-1}WS\Trans,
  \end{equation}
  and the posterior covariance is 
  \begin{equation}
    \label{eq:8}
    V_M = W\otimes \left[ W - WS (S\Trans W S)^{-1}WS\Trans\right].
  \end{equation}
\end{lemma}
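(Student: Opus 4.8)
The plan is to obtain both statements as a direct specialization of the general Gaussian-conditioning formula (\ref{eq:4}) to the Kronecker-structured setting, carrying out all the algebra with the two standard Kronecker identities $(A\otimes C)(D\otimes E)=(AD)\otimes(CE)$ and $(A\otimes C)^{-1}=A^{-1}\otimes C^{-1}$, together with the vectorization rule $(A\otimes C)\vect{B}=\vect{ABC\Trans}$ already recorded in the text. First I would identify the ingredients of (\ref{eq:4}): the latent vector is $v=\vect{B}$ with prior mean $\mu=\vect{B}_0$ and prior covariance $\Sigma=W\otimes W$; the observation $BS=Y$ is linear in $\vect{B}$, since $\vect{BS}=(\Id\otimes S\Trans)\vect{B}$, so the design matrix is $A\Trans=\Id\otimes S\Trans$ (equivalently $A=\Id\otimes S$) with offset $a=\zero$; and the measurement is exact, so I take the noise-free limit $\Lambda\rightarrow\zero$ permitted by the remark following (\ref{eq:4}). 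Substituting these into (\ref{eq:4}) reduces the entire lemma to simplifying the resulting Kronecker expressions.

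Next I would assemble the building blocks. By the mixed-product identity, $\Sigma A=(W\otimes W)(\Id\otimes S)=W\otimes WS$, and the ``Gram matrix'' of the observation is $A\Trans\Sigma A=(\Id\otimes S\Trans)(W\otimes W)(\Id\otimes S)=W\otimes(S\Trans W S)$, whose inverse is $W^{-1}\otimes(S\Trans W S)^{-1}$ by the inverse identity (this is where positive-definiteness of $W$ and full column rank of $S$ enter, guaranteeing both factors are invertible). The key simplification is that the two $W$-factors cancel, leaving the gain $\Sigma A(A\Trans\Sigma A)^{-1}=\Id\otimes\bigl(WS(S\Trans W S)^{-1}\bigr)$.

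For the posterior mean I would then evaluate the correction term on the residual $\vect{Y}-A\Trans\vect{B}_0=\vect{Y-B_0S}$, and apply the vectorization rule with $C=WS(S\Trans W S)^{-1}$ to map the vectorized update back to matrix form, $\vect{(Y-B_0S)C\Trans}$; de-vectorizing and using symmetry of $W$ and of $S\Trans W S$ yields (\ref{eq:7}). For the posterior covariance I would compute $\Sigma A(A\Trans\Sigma A)^{-1}A\Trans\Sigma=W\otimes\bigl(WS(S\Trans W S)^{-1}S\Trans W\bigr)$, subtract it from $\Sigma=W\otimes W$, and pull out the common left factor $W\otimes(\cdot)$ to obtain (\ref{eq:8}).

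The calculation is essentially bookkeeping, so the main obstacle is not conceptual but notational: keeping the Kronecker conventions (row- versus column-stacking, and the placement of the transpose in $(A\otimes C)\vect{B}=\vect{ABC\Trans}$) perfectly consistent, since a single mis-transposition swaps factors such as $S\Trans W$ and $WS\Trans$ in the final expressions. The only genuine mathematical point to verify is the validity of the $\Lambda\rightarrow\zero$ limit, i.e.\ that $A\Trans\Sigma A=W\otimes(S\Trans W S)$ is invertible; this is exactly the maximal-rank / linearly-independent-directions assumption flagged in the footnote after (\ref{eq:4}), which makes $(A\Trans\Sigma A+\Lambda)^{-1}$ converge to $(A\Trans\Sigma A)^{-1}$.
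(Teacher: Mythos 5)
Your proposal is correct and is essentially the proof the paper delegates to Hennig \& Kiefel (2013): a direct specialization of the Gaussian conditioning formula (\ref{eq:4}) with $A\Trans = \Id\otimes S\Trans$, $\Sigma = W\otimes W$, the mixed-product and factored-inverse Kronecker identities, and the noise-free limit justified by $\rk(S)=M$ — the same route the paper's own appendix takes for the harder symmetric analogue (Theorem \ref{thm:symm-hypoth-class-1}), where the cancellation of the $W$-factor in the gain no longer occurs. You also correctly navigate the notational pitfalls you flag: under the row-stacking convention the design matrix is $(\Id\otimes S\Trans)$ rather than the $(\Id\otimes S)$ written loosely in the text, and the factor $WS\Trans$ in (\ref{eq:7})--(\ref{eq:8}) must be parsed as $(WS)\Trans = S\Trans W$ for the dimensions to conform, consistent with Eq.~(\ref{eq:9}).
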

This implies, for example, that Broyden's rank-1 method \cite{broyden1965class}
is equal to the posterior mean update after a single line search for the
parameter choice $W=\Id$.  This is a probabilistic re-phrasing of the much
older observation, most likely by Dennis \& Mor\'e, \cite{dennis1977quasi},
that Broyden's method minimizes a change in the Frobenius norm $\|B_i -
B_{i-1}\|_{F,\Id}$ such that $B_is_i=y_i$.  The weighted Frobenius norm
$\|A\|_{F,W}^2 = \tr(AW^{-1}A\Trans W^{-1})$ (with the positive definite
weighting $W$) is the $\ell_2$ loss on vectorized matrices in the sense that
$\|A\|_{F,W} ^2 = \|\vect{A}\|_{W\otimes W} ^2$.

 An important observation is that Broyden's method ceases to be a
direct match to this update after the first line search, because matrix
$S\Trans W S$ is not a diagonal matrix. This matrix will come to play a central
role; we will call it \emph{the Gram matrix}, because it is an inner product of
$S$ weighted by the positive definite $W$.

\subsubsection{Symmetric hypothesis classes}
\label{sec:symm-hypoth-class}

It is well known that, because the posterior mean of Eq.~(\ref{eq:7}) is not in
general a symmetric matrix, it is a suboptimal learning rule for the Hessian of
an objective function. Which is why this class was quickly abandoned in favour
of the rank-2 updates in the Dennis family mentioned above. Dennis \& Mor\'e
\cite{dennis1977quasi} and Dennis \& Schnabel \cite{dennis1979least} showed
that the minimizer of weighted Frobenius regularizers (the maximizer of the
Gaussian posterior) within the linear subspace of symmetric matrices is given
by the Dennis class of update rules. Hennig \& Kiefel
\cite{hennig13:_quasi_newton_method} constructed a probabilistic interpretation
based on this derivation, which involves doubling the input domain of the
objective function and introducing two separate, independent observations. This
has the advantage of allowing for relatively straightforward nonparametric
extensions, and a broad class of noise models for cases in which gradients can
not be evaluated without error \cite{StochasticNewton}. But artificially
doubling the input dimensionality is dissatisfying.

We now introduce a cleaner derivation of the same updates, by explicitly
restricting the hypothesis class to symmetric matrices. This gives the
covariance matrix a more involved structure than the Kronecker product, and
makes derivations more challenging. It results in a new interpretation for the
Dennis class, fully consistent with the probabilistic framework. Since the
identity of the posterior mean was known from
\cite{dennis1977quasi,dennis1979least} and
\cite{hennig13:_quasi_newton_method}, the interesting novel aspect here is the
structure of the posterior covariance. In essence, it provides insight into the
structure of the loss function \emph{around} the previously known estimates.

We begin by building a Gaussian prior over the symmetric matrices, using the
symmetrization operator $\Gamma$, the linear operator acting on vectorized
matrices defined implicitly through its effect
$\Gamma\vect{A}=\nicefrac{1}{2}\vect{(A+A\Trans)}$ (explicit definition in
Appendix \ref{sec:proof-lemma}).
\begin{lemma}[proof in
  Appendix~\ref{sec:proof-lemma}]\label{lem:symm-hypoth-class-2}
  Assuming a Gaussian prior $p(B) = \N(\vect{B};\vect{B}_0,W\otimes W)$ over
  the space of square matrices $B\in\Re^{N\times N}$ with Kronecker covariance
  $\Cov(B_{ij},B_{k\ell}) = W_{ik}W_{j\ell}$ (this requires $W$ to be a
  symmetric positive definite matrix), the prior over the symmetric matrix
  $\Gamma\vect{B}$ is $p(B) = \N(\Gamma\vect{B};\Gamma\vect{B}_0,W\ostimes W)$.
\end{lemma}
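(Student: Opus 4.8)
The plan is to exploit that $\Gamma$ is a \emph{fixed linear} operator on $\Re^{N^2}$, so that the image of a Gaussian under $\Gamma$ is again Gaussian and only its mean and covariance need to be tracked. First I would invoke the standard fact that a linear image of a Gaussian vector is Gaussian: if $\vect{B}\sim\N(\vect{B}_0,W\otimes W)$, then $\Gamma\vect{B}$ is Gaussian (generically degenerate, as $\Gamma$ projects onto the symmetric subspace). By linearity of expectation its mean is $\Exp[\Gamma\vect{B}] = \Gamma\,\Exp[\vect{B}] = \Gamma\vect{B}_0$, matching the claimed mean parameter immediately. The covariance then follows from the transformation rule $\Cov[\Gamma\vect{B}] = \Gamma\,\Cov[\vect{B}]\,\Gamma\Trans = \Gamma(W\otimes W)\Gamma\Trans$, so the entire content of the lemma reduces to identifying this expression with $W\ostimes W$.

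For that identification I would use the explicit representation of the symmetrization operator (given in the appendix) in the form $\Gamma = \frac{1}{2}(\Id + T)$, where $T$ is the commutation matrix characterised by $T\vect{A}=\vect{A\Trans}$. Two elementary properties of $T$ drive the computation: it is a symmetric involution, $T\Trans = T$ and $T^2 = \Id$, so that $\Gamma$ is a symmetric idempotent (an orthogonal projection, in particular $\Gamma\Trans = \Gamma$); and it intertwines Kronecker factors, $T(A\otimes C) = (C\otimes A)\,T$, which one checks directly from the identity $(A\otimes C)\vect{B}=\vect{ABC\Trans}$ quoted above. Applied with the \emph{identical} factors $A=C=W$ the second property gives the key commutation $T(W\otimes W) = (W\otimes W)T$. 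Substituting $\Gamma=\frac{1}{2}(\Id+T)$, using this commutation, and invoking $(\Id+T)^2 = 2(\Id+T)$ collapses the fourfold expansion of $\Gamma(W\otimes W)\Gamma$ to $(W\otimes W)\Gamma$; comparison with the appendix definition of the symmetric Kronecker product then closes the argument.

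I expect the only real difficulty to be bookkeeping rather than conceptual: the symmetrization acts on the \emph{vectorised} (Kronecker) representation, so the proof hinges on having the commutation identities for $T$ in hand and on the coincidence that the two Kronecker factors are equal. Had the prior covariance been $W_1\otimes W_2$ with $W_1\neq W_2$, the intertwining $T(W_1\otimes W_2)=(W_2\otimes W_1)T$ would not simplify the product and the symmetrised covariance would not reduce to a single symmetric Kronecker factor; the shared $W$ is exactly what makes the collapse happen. I would also note where positive definiteness of $W$ is used: it ensures $W\otimes W$ is a legitimate prior covariance, and that the restriction of $\Gamma(W\otimes W)\Gamma\Trans$ to the symmetric subspace is positive definite, so that $\N(\Gamma\vect{B};\Gamma\vect{B}_0,W\ostimes W)$ is a well-defined density on that subspace even though it is degenerate as a measure on all of $\Re^{N^2}$.
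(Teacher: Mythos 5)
Your proposal is correct, but it takes a genuinely different route from the paper's. Both arguments share the first step---Gaussians are closed under linear maps, so $\Gamma\vect{B}\sim\N(\Gamma\vect{B}_0,\Gamma(W\otimes W)\Gamma\Trans)$ with mean $\Gamma\vect{B}_0$ by linearity of expectation---but they diverge on the identification $\Gamma(W\otimes W)\Gamma\Trans = W\ostimes W$. The paper (Appendix~\ref{sec:proof-lemma}) works at the level of indices: it writes $\Gamma_{ij,k\ell}=\frac{1}{2}(\delta_{ik}\delta_{j\ell}+\delta_{i\ell}\delta_{jk})$ and evaluates the elements of $\Gamma(W\otimes W)\Gamma\Trans$ by a Kronecker-delta sum, collapsing four terms into $\frac{1}{2}(W_{ik}W_{j\ell}+W_{i\ell}W_{jk})$, which is exactly Equation~(\ref{eq:12}). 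You instead argue at the operator level via the commutation (vec-permutation) matrix $T$ with $T\vect{A}=\vect{A\Trans}$: from $\Gamma=\frac{1}{2}(\Id+T)$, $T\Trans=T$, $T^2=\Id$, and the intertwining $T(A\otimes C)=(C\otimes A)T$ (which indeed follows from $(A\otimes C)\vect{B}=\vect{ABC\Trans}$), you obtain $T(W\otimes W)=(W\otimes W)T$ and hence $\Gamma(W\otimes W)\Gamma\Trans=(W\otimes W)\Gamma=\Gamma(W\otimes W)$. This is valid and arguably more structural: as a by-product it exhibits $\Gamma$ as an orthogonal projection and yields commutation identities that make transparent, for instance, the action $(W\ostimes W)\vect{K}=\frac{1}{2}(WKW\Trans+W\Trans K\Trans W)$ quoted in the main text and the restriction of $(W\ostimes W)^{-1}$ to the symmetric subspace. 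Your observation that the \emph{identical} Kronecker factors are what makes the collapse possible (since $T(W_1\otimes W_2)=(W_2\otimes W_1)T$ does not commute for $W_1\neq W_2$) is a correct insight that the paper's index computation leaves implicit. Two small bookkeeping notes: the symmetric Kronecker product is defined in the main text (not the appendix), as $\Gamma(W\otimes W)\Gamma\Trans$ with elementwise form (\ref{eq:12}), so after your collapse you either cite that definition directly---which makes the identification immediate---or supply the one-line check $((W\otimes W)\Gamma)_{ij,k\ell}=\frac{1}{2}(W_{ik}W_{j\ell}+W_{i\ell}W_{jk})$. And your remark on degeneracy is in fact more careful than the paper's: the pushforward is singular on $\Re^{N^2}$, while for symmetric $K\neq 0$ one has $\vect{K}\Trans(W\ostimes W)\vect{K}=\vect{K}\Trans(W\otimes W)\vect{K}=\tr(KWKW)>0$ for positive definite $W$, so the restriction to the symmetric subspace is positive definite, as you claim.
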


Here, $W\ostimes W = \Gamma (W\otimes W)\Gamma\Trans$ is the \emph{symmetric
  Kronecker product} of $W$ with itself (see e.g.~\cite{loan00:_kronec} for an
earlier mention). It is the matrix containing elements
\begin{equation}
  \label{eq:12}
  (W\ostimes W)_{ij,k\ell} = \nicefrac{1}{2}(W_{ik}W_{j\ell} + W_{jk}W_{i\ell}).
\end{equation}
It can easily be seen that, when acting on a square (not necessarily
symmetric) vectorized matrix $K\in\Re^{N\times N}$, it has the effect
$(W\ostimes W)\vect{K} = \nicefrac{1}{2}(WKW\Trans + W\Trans K\Trans
W).$ Unfortunately, not all of the Kronecker product's convenient
algebraic properties carry over to the symmetric Kronecker
product. For example, $(W\ostimes W)^{-1} = W^{-1}\ostimes W^{-1}$,
but $(A\ostimes B)^{-1}\neq A^{-1}\ostimes B^{-1}$ in general, and
inversion of this general form is straightforward only for commuting,
symmetric $A,B$ \cite{alizadeh88:_primal_dual}. This is why the proof
for the following Theorem is considerably more tedious than the one
for Lemma \ref{lem:infer-asymm-matr-1}.

\begin{theorem}[proof in Appendix \ref{sec:proof-theor-refthm}]
  \label{thm:symm-hypoth-class-1}
  Assume a Gaussian prior of mean $B_0$ and covariance $V=W\ostimes W$
  on the elements of a symmetric matrix $B$. After $M$ linearly
  independent noise-free observations of the form $Y=BS$,
  $Y,S\in\Re^{N\times M}$, $\rk(S)=M$, the posterior belief over $B$
  is a Gaussian with mean
  \begin{align}
    \label{eq:9}
    B_M &= B_0 + (Y-B_0S)(S\Trans W S)^{-1}S\Trans W + WS(S\Trans W
    S)^{-1}(Y-B_0S)\Trans\\\notag
    &\quad- WS(S\Trans W S)^{-1}(S\Trans(Y-B_0S))(S\Trans W
    S)^{-1}S\Trans W,
  \end{align}
  and posterior covariance
  \begin{equation}
    \label{eq:14}
    V_M = (W - WS(S\Trans W S)^{-1}S\Trans W)\ostimes (W - WS(S\Trans W S)^{-1}S\Trans W).
  \end{equation}
\end{theorem}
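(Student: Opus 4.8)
The plan is to reduce the symmetric-matrix inference problem to the standard Gaussian posterior formula (\ref{eq:4}) by working entirely in the vectorized representation, and then to unwind the resulting expressions using the algebraic properties of the symmetrization operator $\Gamma$ and the symmetric Kronecker product. The key observation is that all observations $Y=BS$ are linear in $\vect{B}$, so the posterior is Gaussian and its mean and covariance are given by the closed-form formulas, with the only subtlety being that the prior covariance is now the symmetric Kronecker product $W\ostimes W = \Gamma(W\otimes W)\Gamma\Trans$ rather than $W\otimes W$, and that $B$ is constrained to the symmetric subspace.

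**First** I would set up the observation model in vectorized form. Each column observation $Bs_j = y_j$ becomes $(\Id\otimes s_j\Trans)\vect{B} = y_j$, so stacking the $M$ observations gives a linear map $\mathcal{A}\Trans\vect{B}=\vect{Y}$ with $\mathcal{A}\Trans = \Id\otimes S\Trans$ (using the property $(A\otimes C)\vect{B}=\vect{ABC\Trans}$ from the excerpt). Since $B$ lives in the symmetric subspace with covariance $V=W\ostimes W$, I would substitute $\Sigma = W\ostimes W$, $A = \mathcal{A} = \Id\otimes S$, $a=0$, and take the noise-free limit $\Lambda\to 0$ into the posterior formula (\ref{eq:4}). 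This yields
\begin{equation*}
  \vect{B_M} = \vect{B_0} + (W\ostimes W)\mathcal{A}\bigl(\mathcal{A}\Trans(W\ostimes W)\mathcal{A}\bigr)^{-1}(\vect{Y}-\mathcal{A}\Trans\vect{B_0}),
\end{equation*}
and the analogous subtractive form for $V_M$. The whole proof then consists of simplifying the operator products $\mathcal{A}\Trans(W\ostimes W)\mathcal{A}$, its inverse, and the surrounding terms back into matrix notation.

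**The main obstacle** will be exactly the difficulty flagged in the excerpt: the symmetric Kronecker product does not inherit the clean multiplicative and inversion rules of the ordinary Kronecker product, so the reduced Gram matrix $\mathcal{A}\Trans(W\ostimes W)\mathcal{A} = (\Id\otimes S\Trans)\Gamma(W\otimes W)\Gamma\Trans(\Id\otimes S)$ cannot simply be collapsed to $(S\Trans WS)\otimes(\text{something})$. My strategy here is to exploit the idempotence and self-adjointness of $\Gamma$ (namely $\Gamma=\Gamma\Trans$, $\Gamma^2=\Gamma$) together with the fact that the observations $Y=BS$ already respect symmetry, so that $\Gamma$ acting on the relevant vectors can be moved through the products or absorbed. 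The aim is to show that the effective Gram matrix reduces to the ordinary $S\Trans WS$ appearing in the formulas, so that its inverse is well-defined (using $\rk(S)=M$) and the projector $P = W - WS(S\Trans WS)^{-1}S\Trans W$ emerges naturally. I would verify that the posterior covariance has the self-symmetrized form $P\ostimes P$ by checking that the symmetrization at the endpoints commutes with the projection, and confirm the three-term mean (\ref{eq:9}) by devectorizing $(W\ostimes W)\mathcal{A}(\cdots)^{-1}(\cdots)$ using the identity $(W\ostimes W)\vect{K}=\nicefrac{1}{2}(WKW\Trans + W\Trans K\Trans W)$ from the excerpt, which produces precisely the symmetrized pair of rank-structured corrections plus the second-order compensating term.

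**Finally**, a useful consistency check that I would carry out alongside the derivation is to confirm that the posterior mean $B_M$ is manifestly symmetric (as it must be, since the prior and the symmetric Kronecker covariance confine all mass to the symmetric subspace) and that it satisfies the secant relation $B_MS = Y$ exactly; these two sanity conditions pin down the relative weighting of the three terms in (\ref{eq:9}) and guard against algebra errors in the $\Gamma$-manipulations, which are where I expect the bookkeeping to be most error-prone.
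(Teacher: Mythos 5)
Your setup coincides with the paper's first step: vectorize, write the observations as $(\Id\otimes S\Trans)\vect{B}=\vect{Y}$, insert the prior covariance $W\ostimes W$ into Eq.~(\ref{eq:4}), and take the noise-free limit. But the decisive step is exactly the one you wave through. You claim that, by exploiting $\Gamma=\Gamma\Trans$, $\Gamma^2=\Gamma$ and the symmetry of the observations, the effective Gram operator $(\Id\otimes S\Trans)(W\ostimes W)(\Id\otimes S)$ ``reduces to the ordinary $S\Trans WS$'' so that the projector $W-WS(S\Trans WS)^{-1}S\Trans W$ ``emerges naturally.'' That reduction is false. The Gram operator has elements $\nicefrac{1}{2}\left[W_{ij}(S\Trans WS)_{ab} + (WS)_{ib}(WS)_{ja}\right]$ (Eq.~(\ref{eq:53}) in the appendix), and the crossover term does not vanish or get absorbed: already for $M=1$ the operator acting on $\Re^N$ is $\nicefrac{1}{2}\left[(s\Trans Ws)W + (Ws)(Ws)\Trans\right]$, not a multiple of a scalar Gram matrix. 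Indeed, if your reduction were valid, the posterior mean would collapse to the asymmetric rank-$M$ update of Lemma~\ref{lem:infer-asymm-matr-1}, and the third, compensating term in Eq.~(\ref{eq:9}) would never arise---that term is precisely the footprint of the crossover structure. The paper's proof never inverts the $NM\times NM$ operator explicitly; instead it solves the generalized Lyapunov-type equation $WXS\Trans WS + WSX\Trans WS = 2\Delta$ for the unknown $X\in\Re^{N\times M}$ constructively, via the singular value decomposition $S=Q\Sigma U\Trans$, splitting $Q=[Q_+,Q_-]$ into range and kernel parts, reading off the kernel block $R_-$ directly and extracting only the symmetric combination $\nicefrac{1}{2}(R_+ + R_+\Trans)$, whose well-posedness rests on the symmetry $S\Trans\Delta = \Delta\Trans S$. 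This construction, which is the heart of the proof, has no counterpart in your proposal; the covariance computation then reuses it by substituting the relevant block of $\cS\Trans V$ for $\Delta$, so your plan for $V_M$ inherits the same gap.

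A secondary issue: your closing claim that symmetry of $B_M$ together with the secant relation $B_M S = Y$ ``pin down the relative weighting of the three terms'' is not true. Every member of the Dennis family (\ref{eq:1}), for any choice of $c$, is symmetric and satisfies the secant relation, so these two conditions leave an entire family of candidates; they are useful sanity checks but cannot substitute for the missing inversion argument.
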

This immediately leads to the following
\begin{corollary}
  \label{sec:symm-hypoth-class-1}
  The Dennis family of quasi-Newton methods is the posterior mean
  after one step ($M=1$) of Gaussian regression on matrix elements.
\end{corollary}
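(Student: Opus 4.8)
The plan is to derive the corollary directly from Theorem~\ref{thm:symm-hypoth-class-1} by specialising the posterior mean~(\ref{eq:9}) to a single observation, $M=1$. With $S=s\in\Re^N$ and $Y=y\in\Re^N$ a single pair, the Gram matrix $S\Trans W S = s\Trans W s$ collapses to a positive scalar $g$, and the interior factor $S\Trans(Y-B_0 S) = s\Trans(y-B_0 s)$ to a scalar $\rho$. Substituting these into~(\ref{eq:9}) and using $W=W\Trans$ yields
\begin{equation*}
  B_1 = B_0 + \frac{(y-B_0 s)s\Trans W + Ws(y-B_0 s)\Trans}{g} - \frac{\rho\, Ws\,s\Trans W}{g^2}.
\end{equation*}
The only simplification worth flagging is that the scalar inverses $(S\Trans W S)^{-1}$ and the scalar $\rho$ commute freely past the surrounding rank-one matrices, so the three operator-valued summands of~(\ref{eq:9}) reduce to the three terms above.

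Next I would match this against the Dennis update~(\ref{eq:1}) under the identification $c = Ws$. Since $W$ is symmetric, this gives $c\Trans = s\Trans W$ and $c\Trans s = s\Trans W s = g$, whence $c\,c\Trans = Ws\,s\Trans W$ while the scalar $s\Trans(y-B_0 s)=\rho$ sits unchanged in the centre of the quadratic term. Term by term, the symmetric rank-one pair $\bigl((y-B_0 s)c\Trans + c(y-B_0 s)\Trans\bigr)/(c\Trans s)$ reproduces the second and third summands, and the correction $c\,s\Trans(y-B_0 s)\,c\Trans/(c\Trans s)^2 = \rho\,Ws\,s\Trans W/g^2$ reproduces the subtracted term. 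Thus one step of Gaussian regression with prior covariance $W\ostimes W$ is exactly the Dennis rule~(\ref{eq:1}) evaluated at $c = Ws$.

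It remains to confirm that, as $W$ ranges over symmetric positive definite matrices, $c = Ws$ sweeps out all admissible Dennis parameters; this surjectivity statement is where the main work lies. The Dennis update is homogeneous of degree zero in $c$, so only the direction of $c$ matters, and it is well defined precisely when $c\Trans s\neq 0$; for a positive definite prior $c\Trans s = s\Trans W s>0$ holds automatically. Conversely, given any $c$ with $s\Trans c>0$ I would exhibit an explicit positive definite $W$ with $Ws=c$, for instance
\begin{equation*}
  W = \frac{c\,c\Trans}{s\Trans c} + P M P\Trans, \qquad P = \Id - \frac{c\,s\Trans}{s\Trans c},
\end{equation*}
with $M$ any symmetric positive definite matrix. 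A short computation gives $P\Trans s = 0$ and hence $Ws = c$, while $z\Trans W z = (c\Trans z)^2/(s\Trans c) + (P\Trans z)\Trans M(P\Trans z) > 0$ for every $z\neq 0$, because $P\Trans z = 0$ forces $z\propto s$, on which the first term is strictly positive. Finally I would specialise to the named rules~(\ref{eq:17})--(\ref{eq:21}) by reading off their directions for $c$: PSB corresponds to $W=\Id$, Greenstadt to $W=B_0$, and DFP, SR1 and BFGS to the stated $c$ whenever the associated $c\Trans s$ is positive, recovering the classical quasi-Newton methods and completing the identification.
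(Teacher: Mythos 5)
Your proof is correct and its core is the same identification the paper uses: the paper's entire proof is the single line ``Assume $Y,S\in\Re^{N\times 1}$, and set $c=WS$ in Equation~(\ref{eq:1})'', i.e.\ specialise Theorem~\ref{thm:symm-hypoth-class-1} to $M=1$ and read off the match, exactly as in your first two paragraphs (your scalar reductions $g=s\Trans Ws$, $\rho=s\Trans(y-B_0s)$ and the term-by-term comparison are all right). Where you genuinely go beyond the paper is the surjectivity step: the corollary as stated asserts that the \emph{whole} Dennis family arises this way, which requires that every admissible direction $c$ (with $c\Trans s\neq 0$) be realisable as $c=Ws$ for some symmetric \emph{positive definite} $W$; the paper never proves this, remarking only afterwards that ``there is an entire vector space of $W$ consistent with $c=WS$'' --- a statement about the linear solution set, not about the positive definite cone inside it. Your explicit construction $W = cc\Trans/(s\Trans c) + PMP\Trans$ with $P=\Id - cs\Trans/(s\Trans c)$ verifiably satisfies $Ws=c$ and $z\Trans Wz>0$ for $z\neq 0$, and the case $s\Trans c<0$ is correctly absorbed by the degree-zero homogeneity $c\to\alpha c$, $\alpha\in\Re_{\setminus 0}$, which the paper itself notes. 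So your argument is not merely a restatement but closes a converse inclusion the paper leaves implicit; the cost is length, the gain is that the ``family equals posterior means'' claim is actually established in both directions, including for rules like SR1 where $c\Trans s = s\Trans(y-B_0s)$ need not be positive.
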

\begin{proof}
  Assume $Y,S\in\Re^{N\times 1}$, and set $c=WS$ in Equation (\ref{eq:1}).
\end{proof}

Note that, for each member of the Dennis class, there is an entire
vector space of $W$ consistent with $c=WS$. Additionally, each member
of the Dennis family is itself a scalar space of choices $c$, because
Equation (\ref{eq:1}) is unchanged under the transformation
$c\to\alpha c$ for $\alpha \in \Re_{\setminus 0}$. Dealing with these
degrees of freedom turns out to be the central task when defining
probabilistic interpretations of linear solvers.

\subsubsection{Remark on the structure of the prior covariance}
\label{sec:remark-struct-prior}

The fact that symmetric Kronecker product covariance matrices give
rise to some of the most popular secant methods may be reason enough
to be interested in these structured Gaussian priors. This section
provides two additional arguments in their favor. 

The first argument, applicable to the entire family of Gaussian inference
rules, is that they give consistent estimates, and thus convergent solvers: The
priors of Lemma \ref{lem:infer-asymm-matr-1} and Theorem
\ref{thm:symm-hypoth-class-1} assign nonzero mass to all square, and all
symmetric matrices, respectively. It thus follows, from standard theorems about
the consistency of parametric Bayesian priors (e.g.~\cite{leCam_convergence}),
that linear solvers based on the mean estimate arising from either of these two
Gaussian priors, applied to linear problems of general, or symmetric structure,
respectively, are guaranteed (assuming perfect arithmetic precision) to
converge to the correct $B$ (and $B^{-1}$, where it exists) after $M=N$
linearly independent line searches (i.e.~$\rk(S)=M$). This is because the Schur
complement $W-WS(S\Trans WS)^{-1}S\Trans W$ is of rank $N-M$
\cite[Eq.~0.9.2]{zhang2005schur}, so the remaining belief after $M=N$ is a
point-mass at the unique $B=YS^{-1}$. By a generalization of the same argument,
it also follows that these linear solvers are always exact within the vector
space spanned by the line-search directions. This holds for all choices of
prior parameters $B_0$ and $W$, as long as $W$ is strictly positive
definite. Of course, good convergence \emph{rates} do depend crucially on these
two choices. And the aim in this paper is to also identify choices for these
parameters such that the posterior uncertainty around the mean estimate is
meaningful, too.

Since we know $B$ to be positive definite, it would be desirable to
restrict the prior explicitly to the positive definite
cone. Unfortunately, this is not straightforward within the Gaussian
family, because normal distributions have full support. A
seemingly more natural prior over this cone is the Wishart
distribution popular in statistics,
\begin{equation}
  \label{eq:15}
  \mathcal{W}(B;W,\nu) \propto |B|^{\nu/2 - (N-1)/2} \exp\left(-\frac{\nu}{2}\tr(W^{-1}B) \right)
\end{equation}
(the $\propto$ symbol suppresses an irrelevant normalization constant). Using
this prior in conjunction with linear observations, however, causes various
complications, because the Wishart is not conjugate to one-sided linear
observations of the form discussed above. So one may be interested in finding a
`linearization' (a Gaussian approximation of some form) for the Wishart, for
example through moment matching. And indeed, the second moment (covariance) of
the Wishart is $\nu^{-1}(W\ostimes W)$ (see e.g.~\cite{muirhead05:_aspec}).

\section{Choice of parameters}
\label{sec:choice-hyperp}

Having motivated the Gaussian hypothesis class, the next step is to
identify individual desirable parameter choices in this class. The
following Corollary follows directly from Theorem
\ref{thm:symm-hypoth-class-1}, by comparing Equation (\ref{eq:7}) with
Equations (\ref{eq:17}) to (\ref{eq:21}). In each of the following
cases, $\alpha\in\Re_{\setminus 0}$.

\begin{corollary}
  \label{sec:choice-hyperp-1}
  \begin{enumerate}
  \item The Powell symmetric Broyden update rule is the one-step
    posterior mean for a Gaussian regression model with $W=\alpha\Id$.
  \item The Symmetric Rank-1 rule is the one-step posterior mean for a
    Gaussian regression model with the implicit choice
    $W=\alpha(B-B_0)$. (For a specific rank-$1$ observation, there is
    a linear subspace of choices $W$ which give $WS=Y$, but $W=B$ is
    the only globally consistent such choice).
  \item The Greenstadt update rule is the one-step posterior mean for a
    Gaussian regression model with $W=\alpha B_0$. 
  \item The DFP update is the one-step posterior mean for the
    implicit choice $W=\alpha B$. (This choice is unique in a manner
    analogous to the above for SR1).
  \item The BFGS rule is the one-step posterior mean for the implicit
    choice $W=\alpha\left(B + \sqrt{\frac{s\Trans B s}{s\Trans B_{0}
          s}} B_t\right)$. (This, too, is unique in a manner analogous
    to the above).
  \end{enumerate}
\end{corollary}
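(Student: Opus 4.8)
The plan is to reduce everything to Corollary~\ref{sec:symm-hypoth-class-1}, which already identifies the one-step ($M=1$) posterior mean of Theorem~\ref{thm:symm-hypoth-class-1} with the Dennis update~(\ref{eq:1}) under the substitution $c=Ws$ for a single search direction $s\in\Re^N$. Because the Dennis rule~(\ref{eq:1}) is invariant under $c\to\alpha c$ for $\alpha\in\Re_{\setminus 0}$, it suffices, for each named member, to exhibit a symmetric positive definite $W$ such that $Ws$ is \emph{proportional} to the vector $c$ listed in (\ref{eq:17})--(\ref{eq:21}). The only extra ingredient needed is the secant relation $y=Bs$ from Theorem~\ref{thm:symm-hypoth-class-1}, which lets me rewrite those $c$'s, some of which involve $y$, purely in terms of $B$, $B_0$ and $s$.

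The individual matches are then one-line computations. For Powell symmetric Broyden~(\ref{eq:18}), $c=s$ and $W=\alpha\Id$ gives $Ws=\alpha s\propto c$; for Greenstadt~(\ref{eq:19}), $c=B_0 s$ is reproduced by $W=\alpha B_0$; for SR1~(\ref{eq:17}), $y=Bs$ gives $c=y-B_0 s=(B-B_0)s$, matched by $W=\alpha(B-B_0)$; for DFP~(\ref{eq:20}), $c=y=Bs$ is matched by $W=\alpha B$. For BFGS~(\ref{eq:21}) I would first use symmetry of $B$ to write $y\Trans s=s\Trans B s$, so that $c=\big(B+\sqrt{(s\Trans B s)/(s\Trans B_0 s)}\,B_0\big)s$, which is reproduced by the stated $W$. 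In each case one must additionally verify that $W$ is symmetric positive definite, as required for the prior covariance $W\ostimes W$ in Theorem~\ref{thm:symm-hypoth-class-1}: this is immediate for $W=\alpha\Id$ with $\alpha>0$, and holds for the $B$- and $B_0$-dependent choices whenever the relevant matrix ($B$, $B_0$, $B-B_0$, or the BFGS combination) is positive definite.

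The one genuinely subtle point is the parenthetical uniqueness claim attached to SR1, DFP and BFGS. For a single rank-1 observation only the action of $W$ on $\spa\{s\}$ is pinned down, so an entire affine family of $W$ satisfies $Ws\propto c$, and the named matrices are merely the most natural representatives. To argue that $W\propto B$ is the only \emph{globally} consistent choice for DFP, I would demand that one fixed $W$ reproduce the correct update simultaneously for directions $s_1,\dots,s_N$ spanning $\Re^N$: since DFP corresponds to $Ws_i\propto y_i=Bs_i$ for every $i$, this forces $W\propto B$. The SR1 and BFGS cases are analogous but more delicate, because their implicit $W$ retains a dependence on the running estimate $B_0$; this is where I expect the real work to lie, and where the argument connects to the forthcoming consistency results (Lemmas~\ref{lem:struct-gram-matr-1} and~\ref{lem:choice-hyperp-1}) that single out DFP and BFGS from the remaining members of the family.
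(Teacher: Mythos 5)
Your proposal is correct and follows essentially the same route as the paper, which offers no explicit proof beyond the remark that the corollary ``follows directly from Theorem~\ref{thm:symm-hypoth-class-1}'' by matching the choices $c$ in Equations~(\ref{eq:17})--(\ref{eq:21}) against $c=Ws$ from Corollary~\ref{sec:symm-hypoth-class-1}, using $y=Bs$ and the scale invariance $c\to\alpha c$ exactly as you do (your rewriting $y\Trans s = s\Trans Bs$ for BFGS, where the stated $B_t$ is evidently a typo for $B_0$, is the intended reading). Your sketch of the uniqueness parentheticals---that $Ws\propto Bs$ for a spanning set of directions forces $W\propto B$, with the caveat that the BFGS scalar $\sqrt{(s\Trans Bs)/(s\Trans B_0 s)}$ makes its implicit $W$ direction-dependent---goes slightly beyond the paper, which leaves these claims informal, and is consistent with how Lemmas~\ref{lem:struct-gram-matr-1} and~\ref{lem:choice-hyperp-1} later use them.
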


It may seem circular for an inference algorithm trying to infer the matrix $B$
to use that very matrix as part of its computations (SR1, DFP, BFGS). But
computation of the mean in Equation (\ref{eq:9}) only requires the projections
$BS$ of $B$, which are accessible because $BS=Y$. However, the posterior
uncertainty (Eq. \ref{eq:14}), which is not part of the optimizers in their
contemporary form, can not be computed this way.

Hence, with the exception of PSB, the popular secant rules all involve what
would be called \emph{empirical Bayesian} estimation in statistics,
i.e. parameter adaptation from observed data. We also note again that the
connection between probabilistic maximum-a-posterior estimates and Dennis-class
updates only applies in the first of $M$ steps. As such, the Dennis updates
ignore the dependence between information collected in older and newer search
directions that leads to the matrix inverse of $G=(S\Trans WS)$ in Equations
(\ref{eq:9}) and (\ref{eq:14}) (obviously, including this information
explicitly requires solving $M$ linear problems, at additional cost). As will
be shown in Lemma \ref{lem:choice-hyperp-1} below, though, for \emph{some}
members of the Dennis family, and for their use within \emph{linear} problems,
this simplification is in fact \emph{exact}.

\subsection{A motivating experiment}
\label{sec:motiv-numer-exper}

\begin{figure}
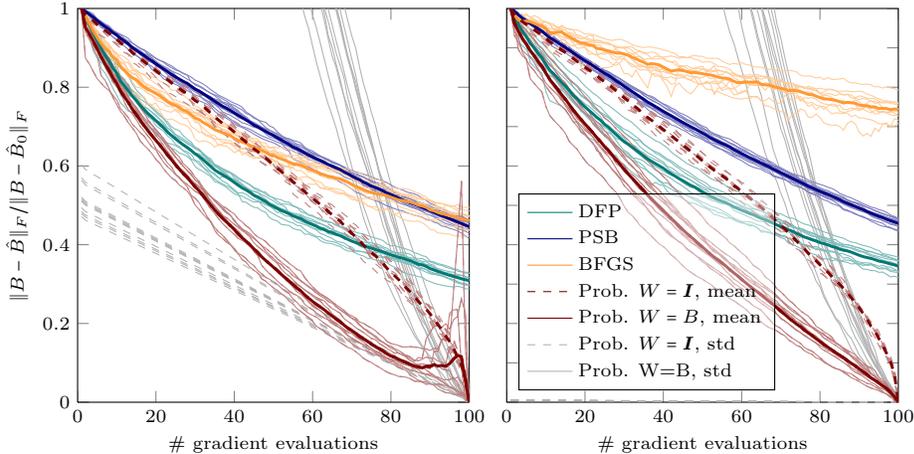

  \centering
  {\scriptsize
    \mbox{%
   \beginpgfgraphicnamed{figures/smalleigs-external}%
   \input{figures/smalleigs.tikz}%
   \endpgfgraphicnamed%
   \beginpgfgraphicnamed{figures/bigeigs-external}%
   \input{figures/bigeigs.tikz}%
   \endpgfgraphicnamed%
 }
  }
  \caption{Effect of parameter choice and exact vs. independent
    updates. {\bfseries Left:} $10$ randomly generated linear problems
    with $N=100$ with eigenvalue scale $\lambda=10$. {\bfseries
      Right:} Analogous problems with eigenvalue scale
    $\lambda=1000$. Individual experiments as thin lines, means over
    all $10$ experiments as thick lines. The spikes for the $W=B$
    estimate at the end of the left plot are numerical artifacts
    caused by ill-conditioned random projections. They do not arise in
    the optimization setting.}
\label{fig:hyperchoices}
\end{figure}

How relevant is the difference between the full rank-$2M$ posterior update and
a sequence of $M$ rank-$2$ updates? Figure \ref{fig:hyperchoices} shows results
from a simple conceptual experiment. For this test only, the various estimation
rules are treated as `stand-alone' inference algorithms, not as optimizers.
Random positive definite matrices $B\in\Re^{N\times N}$ where generated as
follows: Eigenvalues $d_i,i=1,\dots,N$ were drawn iid from an exponential
distribution $p(d) = \nicefrac{1}{\lambda} \exp(-d/\lambda)$ with scale
$\lambda=10$ (small eigenvalues, left plot) or $\lambda=1000$ (large
eigenvalues, right plot), respectively. A random rotation matrix $Q\in SO(N)$
was drawn uniformly from the Haar measure over $SO(N)$, using the subgroup
algorithm of Diaconis \& Shahshahani \cite{diaconis1987subgroup}, giving
$B=QDQ\Trans$ (where $D=\diag(d)$). Projections---simulated `search
directions'---where drawn uniformly at random as $S\in\Re^{N\times M},
s_{nm}\sim\N(0,1)$. For $M=1,\dots,N$, the Powell Symmetric Broyden (PSB), DFP
and BFGS, as well the corresponding posterior means from Equation (\ref{eq:9})
with $W=\Id$ (equal to PSB after one step) and $W=B$ (equal to DFP after one
step) were used to construct point estimates $B_M$ for $B$. The plot shows the
Frobenius norm $\|B_M-B\|_F$ between true and estimated $B$, normalised by the
initial error $\|B_0-B\|_F$. All algorithms used $B_0=\Id$.

Because directions $s$ where chosen randomly, these results say little about
these algorithms as optimizers. What they do offer is an intuition for the
difference between the exact rank-$2M$ posterior and repeated application of
rank-$2$ Dennis-class update rules. A first observation is that, in this setup,
keeping track of the dependence between consecutive search directions through
$S\Trans WS$ makes a big difference: For both pairs of `related' algorithms PSB
and $W=\Id$, as well as DFP and $W=B$, the full posterior mean dominates the
simpler `independent' update rule. In fact, the classic secant rules do not
converge to the true Hessian $B$ in this setup. The consistency argument in
Section \ref{sec:remark-struct-prior} only applies to estimators constructed by
exact inference. The experiment shows how crucial tracking the full Gram matrix
$S\Trans W S$ is after $M>1$.

A second, not surprising observation is that, although both probabilistic
algorithms are consistent---they converge to the correct $B$ after $N$
steps---the quality of the inferred point estimate after $M<N$ steps depends on
the choice of parameters. The simpler $W=\Id$ (PSB) choice performs
qualitatively worse than the $W=B$ (DFP) choice.

The posterior covariances were used to compute posterior uncertainty estimates
for $\|B_M-B\|_F$ (gray lines in Figure \ref{fig:hyperchoices}): The Frobenius
norm can be written as $\|B_M-B\|^2 _F = \vect{(B_M-B)}\Trans \vect{(B_M-B)}$;
thus the expected value of this quadratic form is
\begin{align}
  \label{eq:23}
  \Exp[\vect{(B_M-B)}\Trans \vect{(B_M-B)}] &
  = \sum_{ij} V_{M,(ij)(ij)} = \sum_{ij} \frac{1}{2} (W_{M,ii}W_{M,jj} +
  W_{M,ij}W _{M,ij}),
\end{align}
with $W_M \ce W - WS(S\Trans WS)^{-1}S\Trans W$. (To be clear: for $W=B$,
computing this uncertainty required the unrealistic step of giving the
algorithm access to $B$, which only makes sense for this conceptual
experiment). The uncertainty estimate for $W=\Id$ (dashed gray lines) is all
but invisible in the right hand plot because its values are very close to
0---this algorithm has a \emph{badly calibrated uncertainty measure} that has
no practical use as an estimate of error. The uncertainty under $W=B$ (solid
gray lines), on the other hand, scales qualitatively with the size of $B$. This
is because scaling $B$ by a scalar factor automatically also scales the
covariance by the same factor. This has been noted before as a
`non-dimensional' property of BFGS/DFP
\cite[Eq.~6.11]{nocedal1999numerical}. However, it is also apparent that the
uncertainty estimate is too large in both plots---here by about a factor of
5. To understand why, we consider the individual terms in the sum of Equation
(\ref{eq:23}) at the beginning of the inference: The ratio between the true
estimation error on element $B_{ij}$ and the estimated error is
\begin{equation}
  \label{eq:62}
  e_{ij} ^2 = \frac{(B_0 - B)_{ij} ^2}{\Exp[(B_0 - B)_{ij} ^2]} =
  2\frac{B_{ij} ^2 - 2B_{ij}B_{0,ij} + B_{0,ij} ^2 }{W_{ii}W_{jj} +
    W_{ij} ^2}.
\end{equation}
One may argue that a `well-calibrated' algorithm should achieve
$e_{ij} \approx 1$. A problem with the choice $W=B$ becomes apparent
considering diagonal elements and $B_0=\Id$:
\begin{equation}
  \label{eq:63}
  e_{ii} ^2 = \frac{(B_{ii} - 1)^2}{B_{ii} ^2} = \left(1 - \frac{1}{B_{ii}} \right)^2.
\end{equation}
This means the DFP prior is well-calibrated only for large diagonal
elements $(B_{ii}\gg 1)$. For diagonal elements $B_{ii}\approx 1$, it
is under-confident ($e_{ii}\to 0$, estimating too large an error), and
for very small diagonal elements $B_{ii}>0, B_{ii}\ll 1$, it can be
severely over-confident ($e_{ii}\to \infty$ estimating too small an
error). For off-diagonal elements and unit prior mean, the error
estimate is
\begin{equation}
  \label{eq:64}
  e_{ij} ^2 = \frac{2 B_{ij} ^2}{B_{ij}^2 + B_{ii}B_{jj}} = \frac{2}{1 +
  B_{ii}B_{jj} / B_{ij} ^2}  \qq\text{for } i\neq j.
\end{equation}
For positive definite $B$, $e_{ij} ^2 < 1$ off the diagonal holds because, for
such matrices, $B_{ij} ^2 < B_{ii}B_{jj}$ (see e.g. \cite[Corollary
7.1.5]{horn2012matrix}), but of course $e_{ij} ^2$ can still be very small or
even vanish, e.g. for diagonal matrices. It is possible to at least fix the
over-confidence problem, using the degree of freedom in Corollary
\ref{sec:choice-hyperp-1} to scale the prior covariance to $W=\theta^2 B$ with
$\theta = \lambda_{\min}/(\lambda_{\min}-1)$, using $\lambda_{\min}$, the
smallest eigenvalue of $B$. This at least ensures $e_{ij}\leq 1\;\forall
(i,j)$.

Interestingly, setting $W=B-B_0$ (which gives the SR1 rule after the
first observation, but not after subsequent ones) gives $e_{ii} ^2 =
1$, and $e_{ij}<1$ for $i\neq j$. It also has the property that norm
of the true $B$ under this prior is
\begin{equation}
  \label{eq:65}
  \vect{(B-B_0)}\Trans ((B-B_0)\ostimes(B-B_0))^{-1}\vect{(B-B_0)} =
  \vect{\Id}\Trans\vect{\Id} = N,
\end{equation}
so the true $B$ is exactly one standard deviation away from the mean
under this prior. These properties suggest this covariance, which will
be called \emph{standardized norm} covariance, for further
investigation in \textsection\ref{sec:making-use-posterior}, which
addresses the question: Is it possible to construct a linear solver
that, without `cheating' (using $B$ or $H$ explicitly in the
covariance), has a well-calibrated uncertainty measure, and can thus
meaningfully estimate the error of its computation; ideally, without
major cost increase?

\subsection{Structure of the Gram matrix}
\label{sec:struct-gram-matr}

The above established that, treated as inference rules for matrices, general
Dennis rules are probabilistically exact only after one rank-1 observation
$y=Bs$. How strong is the error thus introduced? In fact, as the following
lemma shows, there are choices of search directions $S$ for which the existing
algorithms do become exact probabilistic inference.

\begin{lemma}[proof in Appendix \ref{sec:proof-lemma-struct-gram}]
  \label{lem:struct-gram-matr-1}
  If the Gram matrix $S\Trans W S$ is a diagonal matrix (i.e., if the search
  directions $S\in\Re^{N\times M}$ are conjugate under the covariance parameter
  $W$), then the $M$ repeated rank-2 update steps of classic secant-rule
  implementations result in an estimate that is equal to the posterior mean
  under exact probabilistic Gaussian inference from $(Y,S)$. (The equivalent
  statement for inverse updates requires conjugacy of the $Y$ under $W$).
\end{lemma}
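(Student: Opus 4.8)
The plan is to exhibit the repeated rank-2 secant updates as \emph{exact sequential Gaussian conditioning}, and then to show that the conjugacy hypothesis ``$S\Trans W S$ diagonal'' is precisely the condition under which the shortcut taken by the classic rules---reusing the fixed prior parameter $W$ at every step rather than the shrinking posterior covariance---introduces no error. The conceptual content is that a diagonal Gram matrix makes distinct observations \emph{uncorrelated under the prior}; indeed the prior cross-covariance between the $i$-th and $j$-th matrix-vector observations is proportional to $(S\Trans W S)_{ij}\,W$, which is the familiar reason sequential and joint Gaussian updates coincide.

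First I would recall from Theorem \ref{thm:symm-hypoth-class-1} that after conditioning on the first $i$ columns the posterior is again Gaussian with symmetric-Kronecker covariance $V_i = \tilde W_i \ostimes \tilde W_i$, where $\tilde W_i \ce W - W S_{1:i}(S_{1:i}\Trans W S_{1:i})^{-1}S_{1:i}\Trans W$ is the Schur complement and $S_{1:i}$ collects the first $i$ directions. Because the Gaussian posterior is unique and conditioning is associative, conditioning on $(Y,S)$ in one shot (the mean of Eq.~(\ref{eq:9})) is identical to conditioning on the pairs $(y_i,s_i)$ one at a time, \emph{provided} each single step is performed with the current covariance $V_{i-1}$. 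By Corollary \ref{sec:symm-hypoth-class-1} applied with prior covariance $\tilde W_{i-1}\ostimes\tilde W_{i-1}$, this exact single step is exactly the Dennis update of Eq.~(\ref{eq:1}) with $c_i = \tilde W_{i-1}s_i$. The classic secant implementations, by contrast, use the \emph{fixed} choice $c_i = W s_i$ at every step, so the two sequences agree if and only if these two choices of $c_i$ agree.

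The crux is therefore to establish $\tilde W_{i-1}s_i = W s_i$ under the hypothesis. This I would do by direct computation: the correction term is $W S_{1:i-1}(S_{1:i-1}\Trans W S_{1:i-1})^{-1}S_{1:i-1}\Trans W s_i$, and the vector $S_{1:i-1}\Trans W s_i$ has $j$-th entry $s_j\Trans W s_i = (S\Trans W S)_{ji}$, which vanishes for every $j<i$ exactly when the Gram matrix is diagonal. Hence the correction is zero and $\tilde W_{i-1}s_i = W s_i$, so the Dennis choice coincides with the exact choice at each step. I would also note that $s_i\Trans \tilde W_{i-1} s_i = s_i\Trans W s_i = (S\Trans W S)_{ii} > 0$ since $W$ is positive definite, so no denominator degenerates and the one-step formula applies without qualification. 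An induction on $i$ then shows that the Dennis sequence reproduces the exact sequential posterior mean at every step, and at step $M$ the common value is the joint posterior mean of Eq.~(\ref{eq:9}).

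I expect the main obstacle to be bookkeeping the covariance recursion rather than any deep difficulty: one must confirm that the posterior after $i$ steps genuinely has the clean form $\tilde W_i\ostimes\tilde W_i$ (so that the ``effective $W$'' seen by step $i+1$ is $\tilde W_i$), and that iterating the symmetric-Kronecker conditioning of Theorem \ref{thm:symm-hypoth-class-1} is legitimately associative in the noise-free limit discussed after Eq.~(\ref{eq:4}). As an independent corroboration one can check directly that the Dennis sequence preserves all earlier secant relations under the hypothesis: for $j<i$ the three correction terms of Eq.~(\ref{eq:1}) all carry a factor $(W s_i)\Trans s_j = (S\Trans W S)_{ij}=0$ except one, whose scalar $(y_i - B_i s_i)\Trans s_j$ vanishes because $y_i\Trans s_j = s_i\Trans B s_j = s_i\Trans y_j$ for the true symmetric $B$, so that $B_{i+1}s_j = B_i s_j = y_j$. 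Finally, the parenthetical inverse-update claim follows verbatim under the exchange $s\leftrightarrow y$, $B\leftrightarrow H$ described in \textsection\ref{sec:inverse-updates}, with conjugacy of the $Y$ replacing conjugacy of the $S$.
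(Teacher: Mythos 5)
Your route is genuinely different from the paper's, and in essence it works. The paper's appendix proof is a direct algebraic verification: it writes the batch posterior mean of Equation~(\ref{eq:9}) as a sum of rank-2 terms (possible because the Gram matrix is diagonal, so $(S\Trans WS)^{-1}$ is diagonal), extracts a recursion in $M$, and then checks by explicit computation of $y_M-B_{M-1}s_M$ and $s_M\Trans(y_M-B_{M-1}s_M)$ that this recursion coincides with the Dennis update for $c_M=Ws_M$. You instead argue structurally: joint Gaussian conditioning equals sequential conditioning, each exact sequential step is (by Theorem~\ref{thm:symm-hypoth-class-1} with $M=1$ and Corollary~\ref{sec:symm-hypoth-class-1}) a Dennis update with $c_i=\tilde W_{i-1}s_i$ built from the running Schur complement, and conjugacy gives $\tilde W_{i-1}s_i = Ws_i$ because $S_{1:i-1}\Trans W s_i=0$ entrywise. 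That last computation is correct and is the real content; your argument has the advantage of \emph{explaining} why diagonality is the right hypothesis (previous conditioning leaves the covariance's action on the new direction untouched), whereas the paper's proof is elementary, self-contained, and sidesteps the degeneracy issues below. The standard quotient property of Schur complements guarantees your recursive $\tilde W_i$ agrees with the batch expression, so the two arguments land on the same object.

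Two caveats. First, your opening motivation is wrong as stated: under the symmetric Kronecker prior the cross-covariance of the observations is $\cov(Bs_i,Bs_j) = \nicefrac{1}{2}\left[(s_i\Trans Ws_j)\,W + Ws_j s_i\Trans W\right]$, and the second term does \emph{not} vanish when the Gram matrix is diagonal---so distinct observations are not uncorrelated under the prior (that claim holds only for the asymmetric Kronecker covariance of Lemma~\ref{lem:infer-asymm-matr-1}). Your proof does not actually use this claim, since sequential and joint Gaussian conditioning coincide unconditionally, so the error is cosmetic, but it should be excised. Second, applying Theorem~\ref{thm:symm-hypoth-class-1} at step $i$ with covariance parameter $\tilde W_{i-1}$ requires care: $\tilde W_{i-1}$ is only positive \emph{semi}definite (rank $N-i+1$), and the paper's proof of that theorem uses $W^{-1}$ in intermediate steps. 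You flag this but do not close it; it can be closed either by a continuity argument (replace $\tilde W_{i-1}$ by $\tilde W_{i-1}+\eps\Id$ and let $\eps\to 0$, noting the scalar Gram element $s_i\Trans \tilde W_{i-1}s_i = s_i\Trans Ws_i>0$ stays bounded away from zero under your hypothesis, and that the observation $y_i=Bs_i$ is consistent with the degenerate prior's support), or by verifying the $M=1$ update formula directly for semidefinite parameter. With that patch supplied, your proof is complete.
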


So a \emph{cheap}\footnote{We note in passing that, to reduce cost further, and
  regardless of whether the Gram matrix is diagonal or not, the updates of
  Eq.~(\ref{eq:7}) can be \emph{approximated} by using only the $\tilde{M}$
  most recent pairs $(s_i,y_i)$, or by retaining a restricted rank $\tilde{M}$
  form of the update. This is the analogue to ``limited-memory'' methods
  \cite{nocedal1980updating} well-known in the literature for large-scale
  problems.}, probabilistic optimizer can be constructed by choosing search
directions conjugate under $W$. The following reformulation of a previously
known Lemma\footnote{This result is quoted by Nazareth in 1979
  \cite{nazareth1979relationship} as ``well-known'', with a citation to
  \cite{murray72:_numer_method_uncon_optim}. The proof in the appendix is less
  general, but may help put this lemma in the context of this text.} shows
that, in fact, both the BFGS and DFP update rules have this property.

\begin{lemma}[additional proof in Appendix
  \ref{sec:proof-lemma-hyperpchoice2}]
  \label{lem:choice-hyperp-1}
  For linear problems $Bx=b$ with symmetric positive definite $B$ and exact
  line searches, under the DFP covariance $W=B$, and linesearches along the
  inverse of the posterior mean of the Gaussian belief, the Gram matrix is
  diagonal. Analogously for inverse updates: For inference on $H=B^{-1}$ under
  the BFGS covariance $W=H$ on the same linear optimization problem and
  linesearches along the posterior mean over $H$, the Gram matrix is diagonal.
\end{lemma}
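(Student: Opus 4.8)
The plan is to recognize that the Gram matrix $S\Trans W S = S\Trans B S$ being diagonal is exactly the statement that the executed steps $s_1,\dots,s_M$ are mutually $B$-conjugate, i.e.\ $s_j\Trans B s_k = 0$ for $j\neq k$. I would first fix the quadratic bookkeeping: write the residual $r_i = b - Bx_i$ (so $\nabla f(x_i) = -r_i$), note that an exact line search along the step $s_i$ forces $s_i\Trans r_i = 0$, that $y_i = \nabla f(x_i) - \nabla f(x_{i-1}) = B s_i$, and that the prescribed direction is $s_{i+1}\propto B_i^{-1} r_i$, the Newton step under the current posterior mean $B_i$. The proof then proceeds by induction on the step count, carrying three coupled invariants simultaneously: (A) conjugacy $s_j\Trans B s_k = 0$ for all $j\neq k$ up to the current index; (B) the hereditary secant property $B_i s_j = B s_j$ for all $j\le i$; and (C) residual orthogonality $r_i\Trans s_j = 0$ for all $j\le i$. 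The base case $i=1$ is immediate: (A) is vacuous, (B) is the secant condition $B_1 s_1 = y_1$ built into the Dennis update (\ref{eq:1}), and (C) is precisely the exact-line-search condition.

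For the inductive step I would first derive the new conjugacy from (B) and (C) alone: using $s_{i+1}\propto B_i^{-1} r_i$ and the hereditary identity $B_i^{-1} B s_j = B_i^{-1}(B_i s_j) = s_j$, one gets $s_{i+1}\Trans B s_j \propto r_i\Trans B_i^{-1} B s_j = r_i\Trans s_j = 0$ for $j\le i$, establishing (A) at level $i+1$. Residual orthogonality then follows from $r_{i+1} = r_i - y_{i+1}$ together with the freshly proven conjugacy (for $j\le i$) and the line-search condition (for $j=i+1$). Finally, to extend the hereditary property I would feed $s_j$ ($j\le i$) through the explicit DFP update of Equation (\ref{eq:1}) with $c = y_{i+1}$: the relation $y_{i+1}\Trans s_j = s_{i+1}\Trans B s_j = 0$ (new conjugacy) annihilates two of the three correction terms, and the surviving term carries the factor $(y_{i+1} - B_i s_{i+1})\Trans s_j = -s_{i+1}\Trans B_i s_j = -s_{i+1}\Trans B s_j = 0$, again by conjugacy and (B), so $B_{i+1} s_j = B_i s_j = B s_j$; combined with the secant condition $B_{i+1} s_{i+1} = y_{i+1}$ this closes (B) at level $i+1$.

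The one standing assumption I would flag is invertibility of the posterior means $B_i$, needed both to define the search direction $B_i^{-1} r_i$ and to use $B_i^{-1} B s_j = s_j$; this is guaranteed because DFP with exact line searches on a symmetric positive definite $B$ preserves positive definiteness of the estimates, which I would cite or verify in passing. The BFGS/inverse case I would not reprove from scratch: under the exchange $s\leftrightarrow y$, $B\leftrightarrow H$, $B_0\leftrightarrow H_0$ from the discussion of inverse updates, the identical argument yields $H$-conjugacy of the $y_i$, i.e.\ $y_j\Trans H y_k = 0$, which is exactly diagonality of $Y\Trans H Y$; moreover, since $y_k = B s_k$ and $H = B^{-1}$ one has $y_j\Trans H y_k = s_j\Trans B s_k$, so it is literally the same conjugacy in the other formulation.

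I expect the main obstacle to be the mutual bootstrapping between the invariants inside a single inductive step: preservation of the hereditary property (B) genuinely requires the new conjugacy (A), while (A) itself relies only on the \emph{old} (B) and (C). Getting this dependency order right---proving (A) first from the level-$i$ data, then (C), then using both to propagate (B)---is the delicate point, since a naive attempt to establish (B) directly would appear circular.
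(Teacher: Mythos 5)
Your proof is correct, but it is organized differently from the paper's. The paper's appendix proof runs a single induction over the index gap $|i-j|$: it first checks consecutive indices $y_i\Trans s_{i+1}=0$ directly, then handles general pairs via the telescoping identity $F_{j-1}=\sum_{a=i}^{j-1}y_a+F_i$ (Eq.~(\ref{eq:243})), exact line searches, and---crucially---the \emph{hereditary} quasi-Newton property $s_i=H_j y_i$ for all $j>i$, which it asserts rather than proves (it is automatic for the exact posterior means of Theorem~\ref{thm:symm-hypoth-class-1}, since noise-free Gaussian inference interpolates all observations, $H_MY=S$; it is \emph{not} automatic for repeated rank-2 Dennis updates). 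You instead run an induction on the step count carrying three invariants, and you \emph{prove} the hereditary property by pushing $s_j$ through the rank-2 update (\ref{eq:1}), using the freshly established conjugacy to annihilate the correction terms. This buys self-containedness precisely at the point the paper leaves implicit---the hereditary property is exactly what fails for general Dennis rules, so deriving it inside the induction rather than citing the quasi-Newton equation is a genuine tightening; the dependency ordering you flag (new conjugacy from old invariants first, then residual orthogonality, then heredity) is indeed the delicate point and you get it right. Conversely, the paper's formulation in terms of $H_{i+1}$ ``irrespective of whether they were constructed by inverting a direct estimate or using an inverse estimate directly'' treats the DFP and BFGS cases in one pass, whereas your appeal to the $s\leftrightarrow y$ exchange for the BFGS case is slightly loose as stated: the duality transports the update algebra but not the iteration dynamics, since $y=Bs$ is the observed quantity in both formulations. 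This is cosmetic, not a gap---your mirrored induction goes through verbatim for the inverse update, in fact more smoothly, since the new conjugacy reads $s_{i+1}\Trans Bs_j\propto r_i\Trans H_iy_j=r_i\Trans s_j=0$ and needs no matrix inversion at all, which also removes the positive-definiteness caveat you correctly flag for $B_i^{-1}$ in the direct case (and which your citation of the classical DFP result, valid here since $y_i\Trans s_i=s_i\Trans Bs_i>0$, adequately covers).
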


The following result by Nazareth \cite{nazareth1979relationship} establishes
that, for linear problems, the inference interpretation for BFGS transfers
directly to the conjugate gradient (CG) method of Hestenes \& Stiefel
\cite{hestenes1952methods}.

\begin{theorem}[Nazareth\footnote{Dixon \cite{dixon1972quasi,dixon1972quasiII}
    provided a related result linking CG to the whole Broyden class of quasi
    Newton methods: they become equivalent to CG when the starting matrix is
    chosen as the pre-conditioner.} \cite{nazareth1979relationship}]
  \label{sec:struct-gram-matr-1}
  For linear optimization problems as defined in Lemma
  \ref{lem:choice-hyperp-1}, BFGS inference on $H$ with scalar prior mean,
  $H_0=\alpha\Id, \alpha\in\Re$, is equivalent to the conjugate gradient
  algorithm in the sense that the sequence of search directions is equal: $s_i
  ^\text{BFGS}=s_i ^\text{CG}$.
\end{theorem}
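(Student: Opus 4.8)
The plan is to argue by induction on the iteration index that the BFGS search directions coincide with those of CG, the inductive hypothesis being that the first $k$ iterates (hence the residuals $r_i=b-Bx_i=-F(x_i)$ and the steps $s_i$) agree with CG. For the base case, the scalar prior mean $H_0=\alpha\Id$ makes the first direction $s_1\propto H_0 r_0=\alpha r_0$, parallel to the CG direction $p_0=r_0$, after which an exact line search fixes the same iterate $x_1$. Throughout I would invoke Lemma~\ref{lem:choice-hyperp-1}, which guarantees that under the BFGS covariance $W=H=B^{-1}$ with line searches along the posterior mean the Gram matrix $G=Y\Trans H Y$ is diagonal; by Lemma~\ref{lem:struct-gram-matr-1} the iterated rank-2 updates then coincide with the exact posterior mean of Theorem~\ref{thm:symm-hypoth-class-1}, and diagonality of $G$ is precisely $B$-conjugacy of the steps, since $y_i\Trans H y_j=(Bs_i)\Trans B^{-1}(Bs_j)=s_i\Trans B s_j$.

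For the inductive step I would compute the next direction $s_{k+1}\propto H_k r_k$ directly from the inverse form of the posterior mean (Equation~(\ref{eq:9}) with $S\leftrightarrow Y$, $B_0\leftrightarrow H_0$, and $W=H$). The key simplification is that two of its four terms vanish. Exact line searches together with $B$-conjugacy give the expanding-subspace relation $s_j\Trans r_k=0$ for all $j\le k$; hence, because $W=H=B^{-1}$ is the \emph{true} inverse, $y_j\Trans H r_k=(Bs_j)\Trans B^{-1}r_k=s_j\Trans r_k=0$, so $Y\Trans H r_k=\zero$ and the two terms carrying this factor drop out. What survives is $H_k r_k=\alpha r_k+HY\,G^{-1}(S-H_0 Y)\Trans r_k$. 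Using $S\Trans r_k=\zero$ and $Hy_j=B^{-1}Bs_j=s_j$ (so that $HY=S$), this reduces to $H_k r_k=\alpha r_k-\alpha\, S\,G^{-1}Y\Trans r_k$, a vector of the form $\alpha r_k+\sum_{j\le k}c_j s_j$ with $c_j\propto y_j\Trans r_k/G_{jj}$.

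It then remains to show the coefficients $c_j$ vanish for $j<k$, so that $H_k r_k\propto r_k+\gamma s_k$ matches the CG recursion $p_k=r_k+\beta_{k-1}p_{k-1}$ (recalling $p_{k-1}\parallel s_k$). Here I would again invoke the inductive hypothesis: since the first $k$ iterates agree with CG, the residuals $r_0,\dots,r_k$ are mutually orthogonal, whence $y_j\Trans r_k=(r_{j-1}-r_j)\Trans r_k=0$ for $j<k$. Thus $H_k r_k=\alpha\bigl(r_k+(\|r_k\|^2/G_{kk})\,s_k\bigr)$. Both $H_k r_k$ and $p_k$ are of the form $r_k+(\cdot)\,s_k$ and both are $B$-conjugate to $s_k$ (the former by conjugacy of the posterior-mean directions, the latter by construction of CG); since the $B$-conjugacy condition pins down that single coefficient uniquely, the two directions coincide, and an exact line search yields $x_{k+1}^{\text{BFGS}}=x_{k+1}^{\text{CG}}$, closing the induction.

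The main obstacle is the middle step: distilling the four-term symmetric posterior mean down to the two-term CG form. This hinges on carefully separating the two roles played by the inverse --- the \emph{true} inverse $H=B^{-1}$ appearing as the covariance $W$ (which produces the annihilations $y_j\Trans H r_k=s_j\Trans r_k$ and the identity $HY=S$) versus the \emph{estimated} inverse with prior mean $H_0=\alpha\Id$ --- and on importing, via the induction, exactly the CG orthogonality relations (expanding subspace and residual orthogonality) at the precise points where they are needed to collapse the cross terms.
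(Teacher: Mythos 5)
Your proposal is correct, but it cannot be ``essentially the same as the paper's proof'' for a simple reason: the paper offers no proof of this theorem at all --- it is imported from Nazareth \cite{nazareth1979relationship}, whose original argument manipulates the recursive rank-2 BFGS update formula directly. What you have written is a genuinely different, self-contained route through the paper's own probabilistic machinery: you take the closed-form exact posterior mean of Equation~(\ref{eq:66}) with $W=H$, justify replacing the iterated rank-2 updates by it via Lemmas \ref{lem:struct-gram-matr-1} and \ref{lem:choice-hyperp-1}, and then collapse the four terms using $HY=S$, the annihilation $Y\Trans H r_k = S\Trans r_k = 0$ (the expanding-subspace relation, which is exactly the Remark following the paper's proof of Lemma \ref{lem:choice-hyperp-1}), diagonality of $G=S\Trans Y$, and residual orthogonality $y_j\Trans r_k=(r_{j-1}-r_j)\Trans r_k=0$ for $j<k$ imported through the inductive hypothesis. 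The algebra checks out: the surviving expression is indeed $H_k r_k=\alpha\bigl(r_k + (\|r_k\|^2/G_{kk})\,s_k\bigr)$ (note $y_k\Trans r_k=-\|r_k\|^2$, so the sign works), and your uniqueness-of-the-conjugate-coefficient step correctly identifies this with $p_k=r_k+\beta_{k-1}p_{k-1}$, since $c\mapsto s_k\Trans B(r_k+c\,s_k)$ is affine with slope $s_k\Trans B s_k>0$. Your use of CG residual orthogonality inside the induction is not circular: it is a classical property of CG proper (the paper itself points to \cite[Thm.~5.3]{nocedal1999numerical}), invoked for iterates that by hypothesis coincide with the BFGS iterates so far, not a consequence of the equivalence being proven. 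Two small caveats: the statement's $\alpha\in\Re$ must be read as $\alpha\neq 0$ for your base case to make sense, and the final step needs $s_k\neq 0$, i.e.\ non-convergence at step $k$, which the paper also assumes. Compared with Nazareth's manipulation of the update recursion, your derivation buys a transparent accounting of exactly where each CG property enters (and where the true inverse $H$ versus the estimated $H_k$ is used), and it sits naturally in this paper's framework --- arguably the proof the paper would have given had it not deferred to the citation.
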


The connection between BFGS and CG is intuitive within the probabilistic
framework: BFGS uses $W=H$, so its mean estimate $H_M$ is the `best guess' for
$H$ under (the minimizer of) the norm $\vect{(H-H_M)}\Trans(H\ostimes
H)^{-1}\vect{(H-H_M)}$, and its iterated estimate $x_M$ is the best rank-$M$
estimate for $x$ when the error is measured as $(x-x_M)\Trans
W^{-1}(x-x_M)=(x-x_M)\Trans B(x-x_M)$. Minimizing this quantity after $M$ steps
is a well-known characterisation of CG \cite[Eq.\ 5.27]{nocedal1999numerical}.

Theorem \ref{sec:struct-gram-matr-1} implies that, describing BFGS in terms of
Gaussian inference also gives a Gaussian interpretation for CG `for free'. From
the probabilistic perspective, and exclusively for linear problems, CG is
`just' a compact implementation of iterated Gaussian inference on $H$ from
$p(H)=\N(H;\Id,H\ostimes H)$, with search directions along $H_MF(x_M) =
H_M(Bx_M - b)$. This observation has conceptual value in itself (the natural
question, left open here, is what it implies for the nonparametric extensions
of CG). But Theorem \ref{sec:struct-gram-matr-1}, among other things, also
implies the following helpful properties for the search directions $s_i$ chosen
by, and gradients $F_i$ `observed' by the (scalar prior mean) BFGS
algorithm. They are all well-known properties of the conjugate gradient method
(e.g. \cite[Thm.~5.3]{nocedal1999numerical}).  In the following, generally
assume that the algorithm has not converged at step $M<N$, and remember that
the $F_M=Bx_M - b$ are the residuals (gradients of $f(x)=\nicefrac{1}{2}x\Trans
B x - x\Trans x$) after $M$ steps, which form $y_M=F_{M} -F_{M-1}$.
\begin{itemize}
\item the set of evaluated gradients / residuals is orthogonal:
  \begin{equation}
  F_i\Trans F_j = 0 \text{ for $i\neq j$ and $i,j<N$}
  \label{eq:34}
\end{equation}
\item the gradients (and thus $Y$) span the Krylov subspaces generated
  by $(B,b)$:
  \begin{equation}
    \label{eq:35}
    \spa\{F_0,F_1,\dots,F_{M}\} = \spa\{F_0,B F_0,\dots,B^M F_0\}
  \end{equation}
\item line searches and gradients span the same vector space:
  \begin{equation}
    \label{eq:36}
    \spa\{s_0,s_1,\dots,s_{M}\} = \spa\{F_0,B F_0,\dots,B^M F_0\}
  \end{equation}
\end{itemize}

\subsection{Discussion}
\label{sec:discussion}

We have established a probabilistic interpretation of the Dennis class of
quasi-Newton methods, and the CG algorithm, as Gaussian inference: The Dennis
class can be seen as Gaussian posterior means after the first line search
(Corollary \ref{sec:symm-hypoth-class-1}), but this connection extends to
multiple search directions only if the search directions are conjugate under
prior covariance (Lemma \ref{lem:struct-gram-matr-1}). For linear problems,
this is the case for the DFP, BFGS update rules (Lemma
\ref{lem:choice-hyperp-1}). Since BFGS is equivalent to CG on linear problems
(Lemma \ref{sec:struct-gram-matr-1}), this also establishes a probabilistic
interpretation for linear CG. These results offer new ways of thinking about
linear solvers, in terms of solving an inference problem by collecting
information and building a model, rather than by designing a dynamic process
converging to the minimum of a function. It is intriguing that, from this
vantage point, the extremely popular CG / BFGS methods look less
well-calibrated than one may have expected
(\textsection\ref{sec:motiv-numer-exper}).

The obvious next question is, can one design explicitly uncertain linear
solvers with a reasonably well-calibrated posterior? In addition to the scaling
issues, a challenge is that, for BFGS / CG, the prior covariance $W=H$ is only
an implicit object. After $M<N$ steps, there exists a
$\nicefrac{1}{2}(N-M)(N-M+1)$-dimensional cone of positive definite covariance
matrices fulfilling $WY=S$ (and, additionally, a scalar degree of freedom
inherent to the Dennis class). How do we pick a point in this space?

\section{Constructing explicit posteriors}
\label{sec:making-use-posterior}

The remainder will focus exclusively on inference on $H=B^{-1}$, on inverse
update rules, priors $p(H)=\N(\vect{H};\vect{H}_0,W\ostimes W)$. As pointed out
in \textsection\ref{sec:inverse-updates}, these arise from the direct rules
under exchange of $S$ and $Y$: Given $(S,Y)\in\Re^{N\times M}$, the posterior
belief is $p(H\g S,Y) = \N(\vect{H};\vect{H}_M,W_M\ostimes W_M)$ with
\begin{align}
  \label{eq:66}
  H_M &= H_0 + (S-H_0Y)(Y\Trans W Y)^{-1}Y\Trans W + WY (Y\Trans W
  Y)^{-1}(S-H_0Y)\Trans \\\notag &\qq - WY(Y\Trans W
  Y)^{-1}[Y\Trans(S-H_0Y)](Y\Trans W Y)^{-1}Y\Trans W\\\notag W_M &= W -
  WY(Y\Trans W Y)^{-1}Y\Trans W.
\end{align}
Recall from Sections \ref{sec:inverse-updates} and Corollary
\ref{sec:choice-hyperp-1} that, cast as an inverse update, BFGS (CG) arises
from the prior $p(H)=\N(H;\Id,\theta^2 (H\ostimes H))$ for arbitrary
$\theta\in\Re_{+}$.

\subsection{Fitting covariance matrices}
\label{sec:outl-foll-steps}

Equations (\ref{eq:20}),~(\ref{eq:21}) show that both the BFGS and DFP priors
in principle require access to $H$. As noted above, for this mean estimate it
is implicitly feasible to use $W=H$, because this computation only requires
observed projections $WY=HY=S$. Computing the covariance under $W=H$, however,
can only be an idealistic goal: After $M$ steps, only a sub-space of rank
$\frac{1}{2}(N(N+1) - (N-M)(N-M+1))$ of the elements of $H$ is identified. To
see this explicitly, consider the singular value decomposition\footnote{With
  orthonormal $Q \in\Re^{N\times N}$ and $U\in\Re^{M\times M}$, and rectangular
  diagonal $\Sigma\in\Re^{N\times M}$, which can be written as
  $\Sigma=[D,\vec{0}]\Trans$ with an invertible diagonal matrix
  $D\in\Re^{M\times M}$ and empty $\vec{0}\in\Re^{M\times (N-M)}$.}  $Y=Q\Sigma
U\Trans$, which defines a symmetric positive definite $T\in\Re^{N\times N}$
through $W=QTQ\Trans$. This notation gives
\begin{equation}
  \label{eq:67}
  W_M = W - WY(Y\Trans W Y)^{-1}Y\Trans W = Q(T-T\Sigma(\Sigma\Trans T
  \Sigma)^{-1}\Sigma\Trans T)Q\Trans.
\end{equation}
Considering the structure of $\Sigma$, one can write $T$ in terms of block
matrices
\begin{equation}
  \label{eq:68}
  T =
  \begin{pmatrix}
    T_{++} & T_{+-}\\ T_{-+} & T_{--}
  \end{pmatrix} \qq\text{then}\qq
  W_M = Q
  \begin{pmatrix}
    0 & 0 \\ 0 & T_{--} - T_{-+}T_{++} ^{-1} T_{+-}
  \end{pmatrix}Q\Trans,
\end{equation}
with $T_{++}\in\Re^{M\times M}, T_{-+}=T_{+-}\Trans \in \Re^{M\times (N-M)},
T_{--}\in\Re^{(N-M)\times (N-M)}$ (and positive definite $T_{++}$, a principal
block of the positive definite $T$). Observing $(S,Y)$, exactly identifies
$[T_{++},T_{+-}]\Trans=QSU\Trans D^{-1}$, and provides no information at
all\footnote{Knowing $H$ to be positive definite does provide a lower bound on
  the eigenvalues of $T_{--}$.}  about $T_{--}$.

A primary goal in designing a probabilistic linear solver is thus, at step
$M<N$, to (1) identify the span of $W_M$, ideally without incurring additional
cost, and to (2) fix the entries in the remaining free dimensions in $W_M$, by
using some regularity assumptions\footnote{A probabilistically more appealing
  approach would be to use a hyper-prior on the elements of $W$, marginalized
  over the unidentified degrees of freedom. It is currently unclear to the
  author how to do this in a computationally efficient way.} about $H$. The
equivalence between BFGS and CG offers an elegant way of solving problem (1),
with no additional computational cost: Recall from Theorem
\ref{thm:symm-hypoth-class-1} and Lemma \ref{lem:struct-gram-matr-1} that the
covariance after $M$ steps under $W=H$ is $W_M\ostimes W_M$ with
\begin{align}
  \label{eq:37}
  W_M = W - \sum_i ^M \frac{W y_i  (Wy_i)\Trans}{s_i\Trans y_i} = W -
  \sum_i ^{M} \frac{s_is_i\Trans}{s_i\Trans y_i} = W - S(S\Trans
  Y)^{-1} S\Trans,
\end{align}
Because, by Equation (\ref{eq:36}) the vector-space spanned by $S$ is
identical to that spanned by the \emph{orthogonal} gradients, we can
write the space of all symmetric positive semidefinite matrices $W$
with the property $WY=S$ as
\begin{align}\label{eq:57}
  W(\Omega) = S(S\Trans Y)^{-1} S\Trans + (\Id -
  \bar{F}\bar{F}\Trans)\Omega(\Id - \bar{F}\bar{F}\Trans),
\end{align}
with the right-orthonormal matrix $\bar{F}$ containing the $M$
normalised gradients $F_i / \|F_i\|$ in its columns, and a positive
definite matrix $\Omega\in\Re^{N\times N}$ (the effective size of the
space spanned in this way is only $\Re^{(N-M)\times(N-M)}$, so
$\Omega$ is over-parameterising this space).

\subsubsection{Standardized norm posteriors using conjugate gradient observations}
\label{sec:stand-norm-post}

Eq.~(\ref{eq:57}) parametrises posterior covariances of the BFGS family. In
light of the scaling issues of these priors discussed in
\textsection\ref{sec:motiv-numer-exper}, one would prefer, from the
probabilistic standpoint, to use the standardized norm priors
$p(H)=\N(H;\alpha\Id,(H-H_0)\ostimes (H-H_0))$, but these priors do not share
BFGS/CG's other good numerical properties. Instead, a hybrid algorithm can be
constructed as follows:
\begin{enumerate}
\item Solve the linear problem using the conjugate gradient method. While the
  algorithm runs, collect $S,Y,\bar{F}$. This has storage cost of $2NM+M$
  floats: Because $Y$ consists of differences between subsequent columns of
  $F$, it does not need to be stored explicitly, the column norms $\|F\|_i $
  required to compute $\bar{F}$ require $M$ extra floats. The computation cost
  of the standard conjugate gradient algorithm is $\mathcal{O}(M)$ matrix-vector
  multiplications (that is, $\mathcal{O}(MN^2)$ assuming a dense matrix), plus $\mathcal{O}(MN)$
  operations for the algorithm itself (including computation of $\|F\|_i$).
\item \emph{Using the $(S,Y,\bar{F})$ constructed by CG}, compute the
  \emph{standardized-norm} posterior on $H$, i.e.\ use the prior
  $p(H)$ defined above, which yields a Gaussian posterior with mean
  and covariance
  \begin{align}
    H_M &= H_0 + (S - H_0 Y)(Y\Trans(S-H_0 Y))^{-1}(S-H_0 Y)\Trans\\
    \label{eq:49}
    &= \alpha \Id - (S-\alpha Y)(Y\Trans S- \alpha Y\Trans
    Y)^{-1}(S-\alpha Y)\Trans     \qq\text{and}\\
    W_M &= (H - H_0) - (S - H_0 Y)(Y\Trans(S-H_0 Y))^{-1}(S-H_0
    Y)\Trans\\ \label{eq:69}
    &= S(S\Trans Y)^{-1} S\Trans + (\Id -
    \bar{F}\bar{F}\Trans)\Omega(\Id - \bar{F}\bar{F}\Trans) -
    \alpha\Id\\\notag
    &\qq- (S - \alpha Y)(Y\Trans S-\alpha Y\Trans Y))^{-1}(S- \alpha Y)\Trans.
  \end{align}
  A prerequisite for this is to choose $\alpha<\lambda_{\min}(H)$,
  less than the smallest eigenvalue of $H$, to ensure that $W=H-H_0$
  is positive definite. But $\lambda_{\min}(H)=1./\lambda_{\max}(B)$,
  which can be estimated efficiently (and without additional cost)
  from the $\|F\|_i$. Another minor hurdle is that Equations
  (\ref{eq:49}) \& (\ref{eq:69}) require the inverse of $S\Trans Y -
  \alpha Y\Trans Y$. The columns of $Y$ are $Y_i=F_{i}-F_{i-1}$, so,
  because conjugate gradient constructs orthogonal gradients, $Y\Trans
  Y$ is a symmetric tridiagonal matrix, $Y_i\Trans Y_j = \delta_{ij}
  (\|F_i\|^2 + \|F_{i-1}\|^2) + (\delta_{i(j-1)} +
  \delta_{(i+1)j})\|F_{i}\|^2$, and $S\Trans Y$ is diagonal because the
  $S$ are conjugate under $B$. So the entire Gram matrix is
  tridiagonal, and the $M$ linear problems in $(Y\Trans S - \alpha
  YY\Trans)^{-1}(S-\alpha Y)\Trans$ can be solved in $\mathcal{O}(M^2)$,
  e.g. using the Thomas algorithm \cite[Alg.~4.3]{conte1981elementary}
\item estimate $\Omega$ according to some
  rule. \textsection\ref{sec:an-estimation-rule} proposes several
  rules of $\mathcal{O}(M)$ cost.
\end{enumerate}
While there is a vague connection between the standardized
norm prior and the SR1 algorithm by Corollary
\ref{sec:choice-hyperp-1}, the algorithm described above is quite
different from the SR1 method. It uses search directions constructed
by BFGS/CG, and its update rule uses the exact Gram matrix, not the
repeated rank-1 updates that give SR1 its name.

\paragraph{Computational cost}
\label{sec:computational-cost}
The computation overhead of constructing this posterior mean and
covariance, after running the conjugate gradient algorithm, is
$\mathcal{O}(M^2)$, which is small compared even to the internal $\mathcal{O}(MN)$ cost
of CG, let alone the $\mathcal{O}(MN^2)$ for the matrix-vector multiplications
in CG. Storing the posterior mean and covariance requires $\mathcal{O}(NM)$
space, which is feasible even for relatively large
problems. Crucially, retaining the covariance adds almost no overhead
to storing the mean alone.

\subsection{Estimation rules}
\label{sec:an-estimation-rule}

\begin{figure}
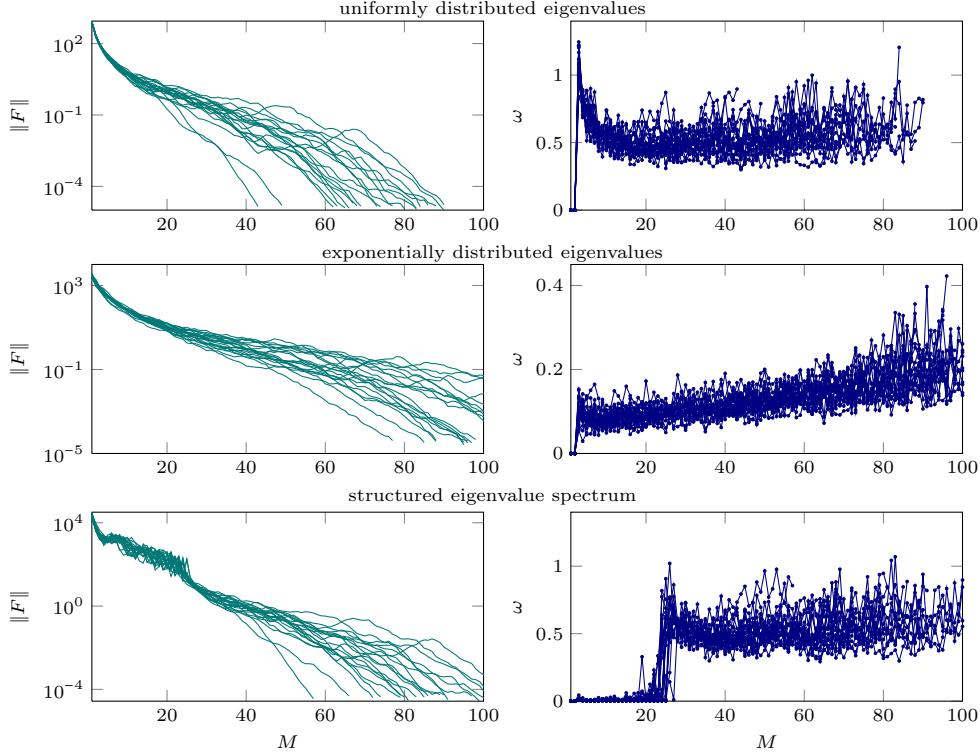

  \scriptsize
  \centering
  uniformly distributed eigenvalues\\
  \mbox{%
   \beginpgfgraphicnamed{figures/exp022gradient-external}%
   \input{figures/exp022gradient.tikz}%
   \endpgfgraphicnamed%
   \beginpgfgraphicnamed{figures/exp022omega-external}%
   \input{figures/exp022omega.tikz}%
   \endpgfgraphicnamed%
 }
  exponentially distributed eigenvalues\\
  \mbox{%
   \beginpgfgraphicnamed{figures/exp023gradient-external}%
   \input{figures/exp023gradient.tikz}%
   \endpgfgraphicnamed%
   \beginpgfgraphicnamed{figures/exp023omega-external}%
   \input{figures/exp023omega.tikz}%
   \endpgfgraphicnamed%
 }
  structured eigenvalue spectrum\\
  \mbox{%
   \beginpgfgraphicnamed{figures/exp024gradient-external}%
   \input{figures/exp024gradient.tikz}%
   \endpgfgraphicnamed%
   \beginpgfgraphicnamed{figures/exp024omega-external}%
   \input{figures/exp024omega.tikz}%
   \endpgfgraphicnamed%
 }
\caption{Fitting posterior uncertainty during iterative solution of linear
  problems, for three different generative processes of $B$. Each plot shows
  results from 20 randomly generated experiments with, {\bfseries top row:}
  uniformly, {\bfseries middle row:} exponentially distributed eigenvalues;
  {\bfseries bottom row:} structured eigenvalue spectrum (details in
  text). {\bfseries Left:} Residual (gradient) $Bx-b$ as a function of number
  of line searches. {\bfseries Right:} projections $\omega=s_M \Trans F_{M-1}$,
  whose regular structure is used for estimating $W(\Omega)$.}
  \label{fig:statistics}
\end{figure}

The remaining step is to find estimates for $\Omega$. It is clear that there
are myriad options for fixing such rules. For an initial evaluation, we adopt
the perhaps simplistic, but straightforward approach of estimating $\Omega$ to
a scalar matrix $\Omega=\omega^2 \Id$ (one way to motivate this is to argue
that, at step $M$, future line searches $s_{M+i}$ will point in an unknown
direction in the span of $\Id-\bar{F}\bar{F}\Trans$, so it makes sense to not
prefer any direction in the choice of $\Omega$).

A natural idea is to use regularity structure on quantities already computed
during the run of the conjugate gradient algorithm: Assume the algorithm is
currently at step $T$. If, at step $M<T$ we had tried to predict the Gram
matrix diagonal element $y_{M+1}\Trans Wy_{M+1} = -s_{M+1}\Trans F_{M}$ using
the structure for $W$ described above, we would have predicted, because $F_{M}$
is known to be in the span of $S$, and orthogonal to $(\Id -
\bar{F}\bar{F}\Trans)$,
\begin{align}
  \label{eq:38}
  y_{M+1}\Trans W y_{M+1} &= F_M \Trans S(S\Trans Y)^{-1}S\Trans F_M +
  F_{M+1}\Trans
  \Omega F_{M+1}\\
  -s_{M+1}\Trans F_M &= - \sum_{i=1} ^M \frac{(F_{M} \Trans
    s_{i})^2}{s_i\Trans F_{i-1}} + \omega^2
  \|F_{M+1}\|^2,\\
  \label{eq:39}
  \text{and thus }\qq \omega^2 &= \|F_{M+1}\|^{-2} \left[\sum_{i=1} ^M \frac{(F_{M} \Trans
    s_{i})^2}{s_i\Trans F_{i-1}} - s_{M+1}\Trans F_M  \right].
\end{align}
$\|F_{M+1}\|$ can be estimated from the norm of preceding gradients. The second
term on the right hand side of Equation (\ref{eq:39}) is known at step $M$. The
first term of the right hand side can be estimated by regression, in ways
further explored below.

First, to confirm that $\omega$ indeed tends to have regular structure related
to the eigenvalue spectrum of $H$, Figure \ref{fig:statistics}, right column,
shows $\omega_i$ for $i=1,\dots,M$ during runs of CG on 20 linear problems,
sampled from three different generative processes for
$B=QDQ\Trans\in\Re^{200\times 200}$. In each case, orthonormal matrices where
drawn uniformly from the Haar measure over $SO(N)$ as in
\textsection\ref{sec:motiv-numer-exper}. For the top row of Figure
\ref{fig:statistics}, the eigenvalues (elements of $D=\diag(d)$) where drawn
uniformly from $p(d_i) = U(0,10)$ (the uniform distribution over $[0,10]$). For
the middle row, eigenvalues where drawn from the exponential distribution
$p(d_i) = \nicefrac{1}{\lambda}\exp(-\nicefrac{d_i}{\lambda})$ with scale
$\lambda=10 / \log 2$ (giving a median eigenvalue of $10$). Finally, for the
bottom row, eigenvalues where drawn from a structured process, with $d_i$ for
$i=1,\dots,20$ drawn from $p(d)=U(0,10^3)$, and $d_i$ for $i=21,\dots,200$
drawn from $p(d)=U(0,10)$ (i.e.~the corresponding eigenvalues of $H$ lie
non-uniformly in $[0,10^{-3}]$ and $[0,0.1]$). Clear structure is visible in
all cases.  Using these observations, several different regression schemes for
$\omega$ can be adopted.
\begin{itemize}
\item A simple baseline is a stationary model for the $\omega_i$. This was used
  to construct error estimates in Figures \ref{fig:H} to \ref{fig:xerr} (in
  gray for the middle and bottom row, black for the top row). Of course, if the
  eigenvalues of $B$ are uniformly distributed in the top row, the eigenvalues
  of $H$ (their inverses) are not.
\item A slightly more elaborate model is a linear trend with noise: $\omega_i =
  ai + b + n$ (with $n\sim\N(0,\sigma^2)$). Linear regression on the values of
  $\omega_i$ can be performed in $\mathcal{O}(M)$. We can then set
  $\Omega=\bar{\omega}\Id$ with $\bar{\omega}=aN + b$ the expected largest
  value of $\omega_i$ (i.e.\ a noisy upper bound). This approach was used to
  construct the (black) error estimates in the middle rows of Figures
  \ref{fig:H} to \ref{fig:xerr}.
\item Finally, if structural knowledge is available, e.g. that the first $L$
  eigenvalues of $B$ are $\alpha$ times larger than the later ones, on may use
  the stationary rule from above, but explicitly multiply the estimate $\omega$
  by $\alpha$ for the first $L$ steps. This may seem contrived, but in fact it
  is not uncommon in applications to know an effective number of degrees of
  freedom in $B$. For example, in nonparametric least-squares regression with a
  very large number of $N$ data points distributed approximately uniformly over
  a range of width $\rho$, using an RBF kernel of length scale $\lambda$, the
  model's number of degrees of freedom is $L=\rho/(2\pi\lambda)$
  \cite[Eq.~4.3]{RasmussenWilliams}. This rule was used to construct (black)
  error estimates in the bottom rows of Figures \ref{fig:H} to \ref{fig:xerr}.
\end{itemize}

\subsection{Estimating quantities of interest}
\label{sec:estim-quant-inter}

\begin{figure}
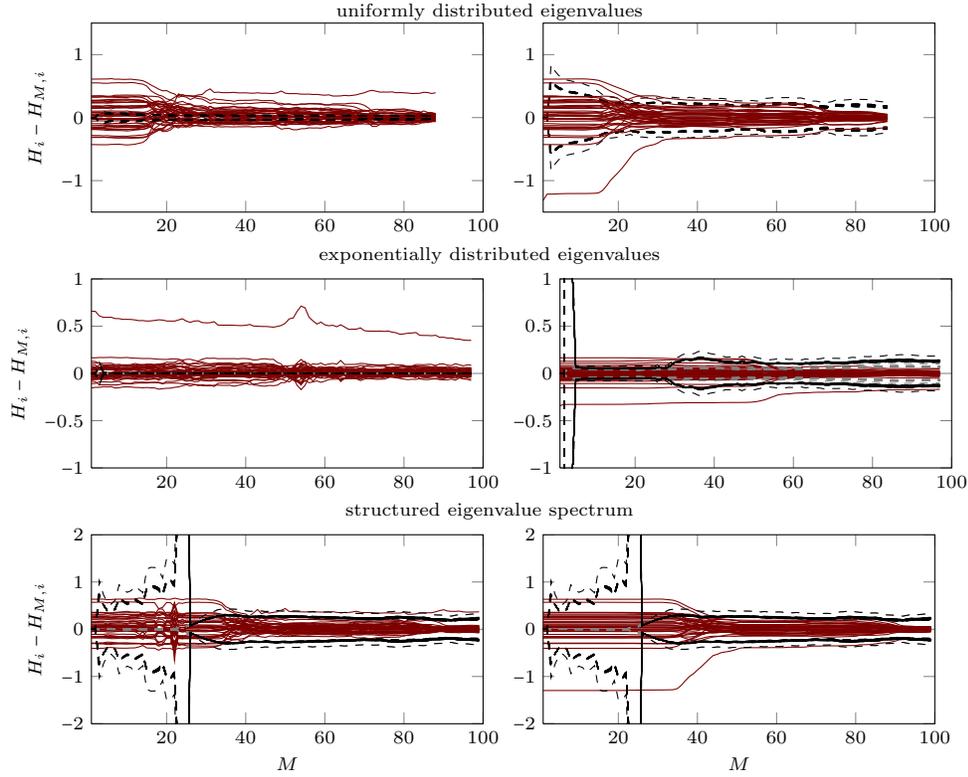

  \scriptsize
  \centering
  uniformly distributed eigenvalues\\
  \mbox{%
   \beginpgfgraphicnamed{figures/exp022Herr-external}%
   \input{figures/exp022Herr.tikz}%
   \endpgfgraphicnamed%
   \beginpgfgraphicnamed{figures/exp022_SR_Herr-external}%
   \input{figures/exp022_SR_Herr.tikz}%
   \endpgfgraphicnamed%
 }
  exponentially distributed eigenvalues\\
  \mbox{%
   \beginpgfgraphicnamed{figures/exp023Herr-external}%
   \input{figures/exp023Herr.tikz}%
   \endpgfgraphicnamed%
   \beginpgfgraphicnamed{figures/exp023_SR_Herr-external}%
   \input{figures/exp023_SR_Herr.tikz}%
   \endpgfgraphicnamed%
 }
  structured eigenvalue spectrum\\
  \mbox{%
   \beginpgfgraphicnamed{figures/exp024Herr-external}%
   \input{figures/exp024Herr.tikz}%
   \endpgfgraphicnamed%
   \beginpgfgraphicnamed{figures/exp024_SR_Herr-external}%
   \input{figures/exp024_SR_Herr.tikz}%
   \endpgfgraphicnamed%
 }  
\caption{Error estimation on $H$. Posterior mean (solid red) and one
  standard deviation (dashed black, gray). {\bfseries Left:} BFGS/CG
  prior. {\bfseries Right:} Standardized norm prior, from CG
  observations. Rows as in Figure \ref{fig:statistics}. The cut-off
  error bars in the bottom right plot rise up to values $<6$.}
  \label{fig:H}
\end{figure}

\begin{figure}
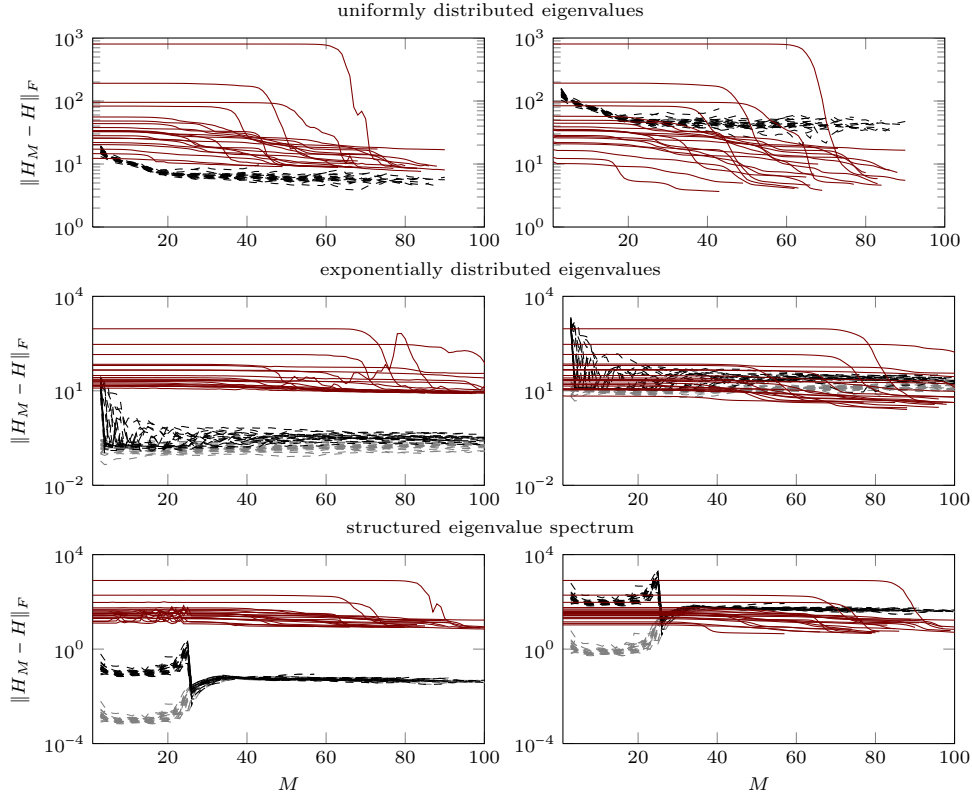

  \scriptsize
  \centering
  uniformly distributed eigenvalues\\
  \mbox{%
   \beginpgfgraphicnamed{figures/exp022H-external}%
   \input{figures/exp022H.tikz}%
   \endpgfgraphicnamed%
   \beginpgfgraphicnamed{figures/exp022_SR_H-external}%
   \input{figures/exp022_SR_H.tikz}%
   \endpgfgraphicnamed%
 }
  exponentially distributed eigenvalues\\
  \mbox{%
   \beginpgfgraphicnamed{figures/exp023H-external}%
   \input{figures/exp023H.tikz}%
   \endpgfgraphicnamed%
   \beginpgfgraphicnamed{figures/exp023_SR_H-external}%
   \input{figures/exp023_SR_H.tikz}%
   \endpgfgraphicnamed%
 }
  structured eigenvalue spectrum\\
  \mbox{%
   \beginpgfgraphicnamed{figures/exp024H-external}%
   \input{figures/exp024H.tikz}%
   \endpgfgraphicnamed%
   \beginpgfgraphicnamed{figures/exp024_SR_H-external}%
   \input{figures/exp024_SR_H.tikz}%
   \endpgfgraphicnamed%
 }  
\caption{True and estimated norm error $\|H-H_M\|_F$. Posterior mean
  (red) and one standard deviation (black, gray). {\bfseries Left:}
  BFGS/CG prior. {\bfseries Right:} Standardized norm prior, from CG
  observations. Rows as in Figure \ref{fig:statistics}.}
  \label{fig:H_norm}
\end{figure}

\begin{figure}
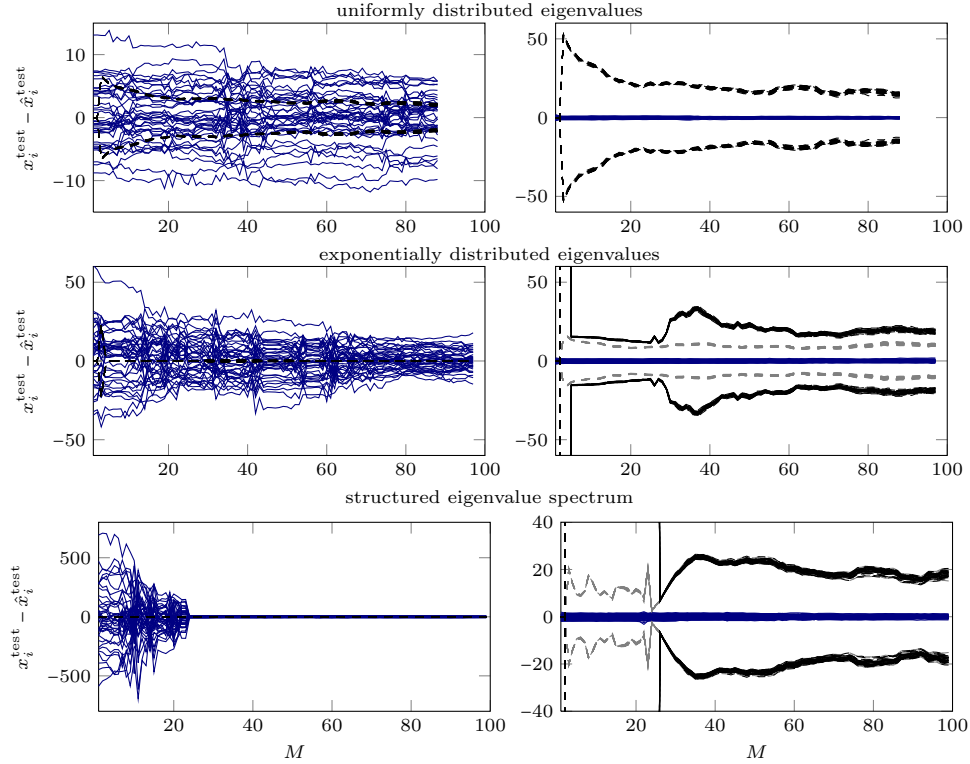

  \scriptsize \centering
  uniformly distributed eigenvalues\\
  \mbox{%
   \beginpgfgraphicnamed{figures/exp022xerr-external}%
   \input{figures/exp022xerr.tikz}%
   \endpgfgraphicnamed%
   \beginpgfgraphicnamed{figures/exp022_SR_xerr-external}%
   \input{figures/exp022_SR_xerr.tikz}%
   \endpgfgraphicnamed%
 }
  exponentially distributed eigenvalues\\
  \mbox{%
   \beginpgfgraphicnamed{figures/exp023xerr-external}%
   \input{figures/exp023xerr.tikz}%
   \endpgfgraphicnamed%
   \beginpgfgraphicnamed{figures/exp023_SR_xerr-external}%
   \input{figures/exp023_SR_xerr.tikz}%
   \endpgfgraphicnamed%
 }
  structured eigenvalue spectrum\\
  \mbox{%
   \beginpgfgraphicnamed{figures/exp024xerr-external}%
   \input{figures/exp024xerr.tikz}%
   \endpgfgraphicnamed%
   \beginpgfgraphicnamed{figures/exp024_SR_xerr-external}%
   \input{figures/exp024_SR_xerr.tikz}%
   \endpgfgraphicnamed%
 }
\caption{Estimating solutions to $Bx'=b'$. Element-wise error on a
  single test vector $x_\text{test}$. True error in blue. Error
  estimate with stationary model for $\omega$ in gray. Error estimate
  for model-specific estimate for $\omega$ (as in Figure
  \ref{fig:statistics}) in black. {\bfseries Left:} BFGS/CG
  prior. {\bfseries Right:} Standardized norm prior, from CG
  observations. Rows as in Figure \ref{fig:statistics}.}
  \label{fig:xerr}
\end{figure}

This final part demonstrates a few example uses of the Gaussian posterior $p(H)
= \N(\vect{H};\vect{H}_M,W_M\ostimes W_M)$ on $H$ constructed by the BFGS / CG
method. Figures \ref{fig:H} to \ref{fig:xerr} show three such uses, explained
below. Each row of this figure uses data from one of the experiments shown in
the corresponding row of Figure \ref{fig:statistics}.

\subsubsection{Estimating $H$ itself}
\label{sec:estimating-h-itself}

The most obvious question is how far the estimate $H_M$ for $H$ after $M$ steps
is from the true $H$. This distance is estimated directly by the Gaussian
posterior of Equation (\ref{eq:66}). The marginal distribution on any linear
projection $A\vect{H}$ is $\N(A\vect{H};A\vect{H}_M,A(W_M\ostimes
W_M)A\Trans)$. In particular, the marginal distribution on each element
$H_{ij}$ is a scalar Gaussian
\begin{equation}
  \label{eq:59}
  p(H_{ij}\g S_M,Y_M) = \N[H_{ij};H_{M,ij},\nicefrac{1}{2}(W_{M,ii}W_{M,jj} + W_{M,ij}^2)].
\end{equation}
Figure \ref{fig:H} shows this error estimate for 40 elements of one
particular $H$ (drawn uniformly at random from the $4\cdot 10^4$
elements of the $200\times 200$ matrix). The estimate arising from the
uniform estimation rule for $\omega$ from Section
\ref{sec:an-estimation-rule} is shown in gray in each panel (black for
the top panel). The same quantity, estimated with the linear
regression and structured estimation rules from Section
\ref{sec:an-estimation-rule} are shown in black in the middle and
bottom row, respectively. The left column of the figure shows results
from the BFGS/CG prior, the right column shows results using the
standardized norm prior on data constructed with the CG algorithm as
described in \textsection\ref{sec:stand-norm-post}. As expected from
the argument in \textsection\ref{sec:motiv-numer-exper}, the BFGS
estimates are regularly considerably too small, while the
standardized-norm estimates have a meaningful width. The error
estimators have varying behaviour. For the exponential eigenvalue
spectrum, the estimator fluctuates strongly in the first few steps
before settling to a good value (this could be corrected using a
regularizer, left out here to not bias the results). For the
structured-eigenvalues problems, the region around the step from small
to large eigenvalues is problematic. But overall, they do
provide a meaningful notion of error. In particular, they are rarely
too small. For most uses of statistical error estimators, it is better
to be too conservative (too large) than to be too confident. Of
course, it would be great if future research would find better
calibrated error estimates.

As explained in Equation (\ref{eq:23}), the same error estimates can
also be collapsed into an error estimate on the norm
$\|H-H_M\|_F$. Figure \ref{fig:H_norm} shows results from such an
experiment, for the 20 different $H$'s from Figure
\ref{fig:statistics}. The quantitative results are similar to the
previous figure, but this figure more clearly shows the difference
between the baseline (gray) and exponential, structured error
estimates (black), and the behaviour of the estimated errors relative
to the varying norms of the drawn $H$'s.

\subsubsection{Estimating solutions for new linear problems}
\label{sec:estim-solut-other}

An obvious use for the estimate for $H$ found by CG / BFGS when
solving \emph{one} linear problem $Bx=b$ is as an instantaneous
solution estimate for \emph{other} linear problems $Bx_\text{test} =
b_\text{test}$. The left and middle columns of Figure \ref{fig:xerr}
shows this use. In each case, an $x_\text{test}$ was drawn from
$\N(x;0,10\Id)$, and the corresponding $b_\text{test}=Bx_\text{test}$
presented to the algorithm. Since $x_\text{test}=Hb_\text{test}$ is a
linear projection of $H$, the posterior marginal on $x_\text{test}$ is
also Gaussian $ p(x_\text{test}\g S_M,Y_M) =
\N(x_\text{test},H_Mb_\text{test},\Sigma)$, and has covariance matrix
elements
\begin{equation}
  \label{eq:61}
  \Cov(x_{\text{test},i},x_{\text{test},j})=\Sigma_{ij}=\nicefrac{1}{2}(W_{ij}x_\text{test}
  \Trans W x_\text{test} + (Wx_\text{test})_i (W_\text{test})_j  .
\end{equation}
Figure~\ref{fig:H_norm} shows the true errors on the elements of
$x_\text{test}$ in blue, and the estimated marginal errors (the
diagonal elements of $\Sigma$) in black for the stationary, linear,
structured models, respectively (and, as in previous figures, the
stationary model in gray in the two non-stationary cases). More
drastically than the previous ones, these figures show that the BFGS
posterior can severely underestimate the error on elements of
$x_\text{test}$, while the standardized norm prior at least provides
outer bounds (albeit sometimes quite loose ones).

\paragraph{Remark on convergence}
\label{sec:remark-convergence}

The error on $x_\text{test}$ does not always collapse over the course of
finding $x$. This says more about CG as such than about its probabilistic
interpretation: CG does not aim to construct $H$, but only to find $x_*$. For
simplicity of exposition, we have assumed that $H=B^{-1}$ exists, and CG
requires the full $N$ steps to converge, thus identifying $B$ and $H$. In
general, CG regularly converges much earlier. For an intuition, consider the
special case where $x_0=0$ and $b=[1,\dots,1,0,0,\dots,0]$ consists of $K$
consecutive ones and $N-K$ zeros. The CG/BFGS algorithm will never explore the
lower $(N-K)\times(N-K)$ block of $H$, which may contain arbitrary numbers. If
the primary aim is not $x_*=Hb$ but $H$ itself, a more elaborate course is
needed; e.g. choosing several $b$ to span a space of interest over $H$. It is
an interesting open question whether the probabilistic interpretation can be
used to \emph{actively} collapse the uncertainty on $H$ in a \emph{typically}
more efficient way than established matrix inversion methods like Gauss-Jordan
(which is also a conjugate direction method \cite{hestenes1952methods}).

\section{Conclusion \& outlook}
\label{sec:conclusion}

This text developed a probabilistic interpretation of iterative solvers for
linear problems $Bx=b$ with symmetric $B$. The Dennis family of secant updates
can be derived as the posterior mean of a parametric Gaussian model after one
rank-1 observation. For rank $M$ observations, the match between these updates
and Gaussian inference only holds if the search directions are conjugate under
the prior covariance. This is the case for the DFP direct and BFGS inverse
updates rules. Their equivalence to CG in the linear case makes them
particularly interesting. However, it also became apparent that, from a
inference perspective, the BFGS rule does not yield a well-scaled error
measure.

As a first step toward a better scaled Gaussian belief, the standardized norm
covariance, was proposed. It is inspired by the SR1 rule, but leads to
probabilistic corrections in the form of off-diagonal terms, and can be used
with data produced by the CG algorithm, thus retaining the good numerical
properties of that method. The space of possible covariance matrices consistent
with the resulting mean is a sub-space of the positive definite cone, which
collapses during the run of the algorithm (the same holds for the BFGS / CG
method). Several possible estimation rules for choosing elements in this space
of covariances where proposed, arising from different structural assumptions
over $H$. The resulting Gaussian posterior provides joint uncertainty estimates
on the elements of $H$, and all linear projections of $H$, in particular of
other linear problems $x_\text{test}=Hb_\text{test}$. This adds functionality
to the conjugate gradient method, at a computational overhead much smaller than
the cost of CG itself.

The implications for \emph{non}linear optimization methods of both the
quasi-Newton and CG families remain interesting open questions. For example,
clearly the conjugacy assumption implicit in the Dennis class members is
inconsistent with the probabilistic interpretation. This was already noted by
Hennig \& Kiefel \cite{HennigKiefel,hennig13:_quasi_newton_method}, who also
proposed using a nonparametric Gaussian formulation to give a more explicit
inference interpretation to nonlinear optimization. This left questions
regarding the choice of prior covariance, which are only made more pressing by
the results presented here. Another direction is inference from noisy
evaluations, in which case the posterior covariance does not collapse to zero
after finitely many steps of optimization, not even in the linear case. Some
related results where previously discussed in \cite{StochasticNewton}, but the
study of probabilistic numerical optimization remains at an early stage.

\Appendix

\section{Proofs for results from main text}
\label{sec:proofs-results-from}
Throughout the appendix, the notation $\Delta=Y-B_0S$ will be used to
represent the residual.

\subsection{Proof for Lemma \ref{lem:symm-hypoth-class-2}}
\label{sec:proof-lemma}

Because the operator $\Gamma$ maps $\sum_{k\ell}\Gamma_{ij,k\ell}
A_{k\ell} = \nicefrac{1}{2}(A_{ij} + A_{ji})$ for all $A$, its
elements can be written as $\Gamma_{ij,k\ell} =
\nicefrac{1}{2}(\delta_{ik}\delta_{j\ell} +
\delta_{i\ell}\delta_{jk})$, using Kronecker's $\delta$ function. We
also note that Gaussians are closed under linear operations (see
e.g. \cite[Eq. 2.115]{bishop2006pattern}:
$p(B)=\N(\vect{B};\vect{B}_0,V)$ implies $p(\Gamma \vect{B})=\N(\Gamma
\vect{B};\vect{B}_0,\Gamma V \Gamma\Trans)$. We complete the proof by
observing that
  \begin{align}
    \label{eq:40}
    (\Gamma (W\otimes W) \Gamma\Trans)_{ij,k\ell} &= \sum_{ab,cd}
    \nicefrac{1}{4} (\delta_{ia}\delta_{jb} + \delta_{ib}\delta_{ja})
    (\delta_{kc}\delta_{\ell d} + \delta_{kd}\delta_{\ell
      c})W_{ac}W_{bd}\\
    &=  \nicefrac{1}{4}(W_{ik}W_{j\ell} + W_{i\ell}W_{jk} + W_{jk}W_{i\ell} +
    W_{j\ell}W_{ik})\\
    &= \nicefrac{1}{2}(W_{ik}W_{j\ell} + W_{i\ell} W_{jk}) \qqqq \endproof
  \end{align}

\subsection{Proof for Theorem \ref{thm:symm-hypoth-class-1}}
\label{sec:proof-theor-refthm}

To be shown: Given $p(B)=\N(\vect{B};\vect{B}_0,W\ostimes W)$, the
posterior from the likelihood $\delta(Y-BS)=\lim_{\Lambda\to
  0}\N(Y;(\Id\otimes S)B,\Lambda)$, with $Y,S\in\Re^{N\times M}$ and
$\rk(S)=M$ has mean (with $\Delta=Y-B_0S$)
\begin{align}
  \label{eq:55}
  B_M &= B_0 + \Delta(S\Trans W S)^{-1}WS\Trans + WS(S\Trans W
  S)^{-1}\Delta\Trans\\\notag
  &\quad- WS(S\Trans W S)^{-1}(S\Trans\Delta)(S\Trans W
  S)^{-1}S\Trans W,
  \intertext{and covariance}
  \label{eq:56}
  V_M &= (W - WS(S\Trans W S)^{-1}S\Trans W)\ostimes (W - WS(S\Trans W S)^{-1}S\Trans W).
\end{align}
We begin with the posterior mean (\ref{eq:55}). From Equation
(\ref{eq:4}), it has the form (with the prior covariance $V=W\ostimes
W$)
\begin{align}
  \vec{B}_0 + V(\Id\otimes S)[(\Id\otimes S\Trans)V(\Id\otimes S)]^{-1}\vect{\Delta}.
\end{align}
A few straightforward steps establish that the $NM\times NM$ matrix to
be inverted is indeed invertible for linearly independent columns of
$S$, and has elements
\begin{align}
  \label{eq:53}
  [(\Id\otimes S\Trans)V(\Id\otimes S)]_{ia,jb} &=
  \nicefrac{1}{2}[W_{ij}(S\Trans W S)_{ab} + (WS)_{ib}(WS)_{ja}].
\end{align}
Also, the elements of $V(\Id\otimes S)$ are
\begin{align}
  \label{eq:54}
  [V(\Id\otimes S)]_{ij,ka} = \nicefrac{1}{2}(W_{ik}S_{ja} + W_{jk}S_{ia}).
\end{align}
So we are searching the unique matrix $X\in\Re^{N\times M}$ satisfying
\begin{align}
  \label{eq:41}
  \vect{\Delta} = [(\Id\otimes S\Trans)V(\Id\otimes S)]\vect{X} =
  \nicefrac{1}{2}(\vect{WXS\Trans W S + WSX\Trans W S}),
\end{align}
which then gives the posterior as $\nicefrac{1}{2}(WXS\Trans W S WSX\Trans
W)$. (Because $X$ is rectangular, Equation (\ref{eq:41}) is a generalization of
a Lyapunov equation. Standard solutions for such Equations do not apply
directly). Instead of just presenting a solution, the following lines show a
constructive proof. We first re-write Eq.~(\ref{eq:41}) ($S\Trans WS$ is
invertible because $W$ is positive definite, and $S$ is assumed to be of rank
$M$) as
\begin{align}
  2\Delta &= WXS\Trans WS + WSX\Trans WS,\\
  \label{eq:43}
  2W^{-1}\Delta(S\Trans WS)^{-1} &= X + SX\Trans WS(S\Trans
  WS)^{-1}.
\end{align}
Let $Q\Sigma U\Trans = S$ be the singular value decomposition of $S$.
That is, $Q\in\Re^{N\times N}$ and $U\in\Re^{M\times M}$ are
orthonormal, $\Sigma\in\Re^{N\times M}$, consisting of an upper part
containing the diagonal matrix $D\in\Re^{M\times M}$ and a lower part
in $\Re^{(N-M)\times M}$ containing on zeros. We will write
$Q=[Q_+,Q_-]$, where $Q_+\in\Re^{N\times M}$ is a basis of the
preimage of $S$, and $Q_-\in\Re^{(N-M)\times M}$ is a basis of the
kernel of $S$. Because $S$ is full rank, $D$ is invertible, and we can
equivalently write
  \begin{equation}
    \label{eq:42}
    X=QRD^{-1}U\q\text{ with a (generally dense) matrix }\q 
    R =
    \begin{pmatrix}
      R_+\\R_-
    \end{pmatrix}
  \end{equation}
($R_+\in\Re^{M\times M}, R_-\in\Re^{(N-M)\times M}$). This allows
re-writing Equation (\ref{eq:43}) as
\begin{align}
  \label{eq:48}
  2Q\Trans W^{-1}\Delta(S\Trans WS)^{-1} &= RD^{-1}U\Trans + Q\Trans Q\Sigma
  U\Trans(UD^{-1}R\Trans Q\Trans WS)(S\Trans WS)^{-1}\\
  \notag
  2\begin{pmatrix}
    Q_+ \Trans W^{-1}\Delta(S\Trans WS)^{-1} UD\\
    Q_- \Trans W^{-1}\Delta(S\Trans WS)^{-1} UD\\
  \end{pmatrix}
  &=
  \begin{pmatrix}
    R_+ + [R\Trans Q\Trans W S](S\Trans W S)^{-1}UD\\
    R_- 
  \end{pmatrix},
\end{align}
which identifies $R_-$. Noting that $Q_+Q_+\Trans = Q_+
D^{-1}UU\Trans DQ_+\Trans = S^+S\Trans$, we can write
\begin{align}
  \label{eq:44}
  R\Trans Q\Trans W S &= (R_+ \Trans Q_+ \Trans + R_-\Trans
  Q_- \Trans)WS\\
  \label{eq:45}
  &= (R_+\Trans Q_+ \Trans + 2DU\Trans(S\Trans W S)^{-1}\Delta\Trans
  W^{-1}Q_-Q_-\Trans)WS\\
  \label{eq:46}
  &= R_+\Trans Q_+\Trans WS + 2DU\Trans (S\Trans W S)^{-1}\Delta\Trans
  W^{-1}(\Id -Q_+Q_+ \Trans) WS\\
  \label{eq:47}
  &= R_+\Trans Q_+ \Trans WS + 2DU\Trans (S\Trans W S)^{-1}\Delta\Trans S\\
  \notag
  &\quad - 2DU\Trans (S\Trans W S)^{-1}\Delta\Trans W^{-1}S^+ (S\Trans W S).
\end{align}
Plugging back into Equation (\ref{eq:48}), using $(S\Trans W S)^{-1}
UD = (Q_+\Trans W S)^{-1}$, we get
\begin{align}
  \notag
  2Q_+ \Trans W^{-1}\Delta(S\Trans WS)^{-1} UD &= R_+ + R_+\Trans Q_+ \Trans
  WS(Q_+ \Trans W S)^{-1}\\ \notag
  &\quad + 2DU\Trans (S\Trans W S)^{-1}\Delta\Trans S(S\Trans W
  S)^{-1} UD\\ \notag
  &\quad - 2DU\Trans (S\Trans W
  S)^{-1}\Delta\Trans W^{-1}S^+ (S\Trans W S)(S\Trans W S)^{-1}UD\\ 
  &=R_+ + R_+\Trans + 2DU\Trans (S\Trans W S)^{-1}\Delta\Trans S(S\Trans W
  S)^{-1} UD \label{eq:50}\\ \notag
  &\quad - 2DU\Trans (S\Trans W
  S)^{-1}\Delta\Trans W^{-1}S^+ UD\\
  \nicefrac{1}{2}(R_+ + R_+\Trans) &= Q_+\Trans W ^{-1} \Delta(S\Trans
  WS)^{-1} UD + DU\Trans(S\Trans W S)^{-1}\Delta\Trans W^{-1}Q_+\\
  &\quad - DU\Trans (S\Trans W S)^{-1}\Delta\Trans S (S\Trans W S)^{-1}UD.
\end{align}
We see directly that this is a symmetric matrix, because $S\Trans
\Delta = S\Trans BS - S\Trans B_0 S = \Delta\Trans S$.
Now, noting that $XS\Trans + SX\Trans = Q_+(R_++R_+\Trans)Q_+\Trans +
Q_-R_-Q_+\Trans + Q_+R_-\Trans Q_-\Trans$, we find
\begin{align}
  \label{eq:51}
  \nicefrac{1}{2}(XS\Trans + SX\Trans) &= (Q_+Q_+\Trans W^{-1}\Delta (S\Trans W
  S)^{-1}S\Trans)\\ \notag
  &\quad- S(S\Trans W S)^{-1}\Delta\Trans S (S\Trans W
  S)^{-1} S\Trans\\ \notag
  &\quad+S(S\Trans W S)^{-1}\Delta\Trans W^{-1}Q_+Q_+\Trans \\ \notag
  &\quad\cdot(\Id-Q_+Q_+\Trans) W^{-1}\Delta(S\Trans W S)^{-1}S\Trans\\ \notag
  &\quad+ S(S\Trans W S)^{-1}\Delta\Trans W^{-1}(\Id-Q_+Q_+\Trans)\\
  \label{eq:52}
  &= - S(S\Trans W S)^{-1}\Delta\Trans S (S\Trans W
  S)^{-1} S\Trans\\ \notag
  &\quad+ W^{-1}\Delta(S\Trans W S)^{-1}S\Trans + S(S\Trans W S)^{-1}\Delta\Trans W^{-1}.
\end{align}
From Equation (\ref{eq:54}), the posterior mean can be written as
\begin{align}
  B_M &= B_0 + \nicefrac{1}{2}(WXS\Trans W + WSX\Trans W),
\end{align}
which is clearly equal to Equation (\ref{eq:55}). To establish the
form of the posterior covariance, we make use of the structural
similarities between the posterior mean and covariance (Equation
(\ref{eq:4})), and notice that we have just established 
\begin{align}
  \sum_{ka,nb} (V\cS)_{ij,ka}&(\cS\Trans V \cS)^{-1} _{ka,nb} \Delta_{nb}\\\notag
  &= [\Delta(S\Trans W S)^{-1}S\Trans W + WS(S\Trans W S)^{-1}\Delta\Trans]_{ij}\\\notag
  &\quad - [WS(S\Trans W S)^{-1}\Delta\Trans S (S\Trans W S)^{-1}
  S\Trans W]_{ij}.
\end{align}
So we can simply replace $\Delta_{nb}$ with $(\cS\Trans V)_{nb,k\ell} =
\nicefrac{1}{2}[W_{nk}(S\Trans W)_{b\ell} + W_{n\ell}(S\Trans
W)_{bk}]$ and find, after a few lines of simple algebra, the form of
Equation (\ref{eq:56}) for the posterior covariance. This completes
the proof.$\qq\endproof$

\subsection{Proof for Lemma \ref{lem:struct-gram-matr-1}}
\label{sec:proof-lemma-struct-gram}

To be shown: If the Gram matrix $S\Trans W S$ is diagonal, then the
exact posterior mean $B_M$ after $M$ steps, which is
\begin{align}
  \label{eq:24}
  B_M &= B_0 + \Delta(S\Trans W S)^{-1}S\Trans W + WS(S\Trans W
  S)^{-1}\Delta\Trans\\\notag
  &\quad- WS(S\Trans W S)^{-1}(S\Trans\Delta)(S\Trans W
  S)^{-1}S\Trans W,
\end{align}
is equal to the rank-2 update of $B_{M-1}$ using the Dennis update
\begin{align}
  \label{eq:25}
  B_M &= B_{M-1} + \frac{(y_M-B_{M-1}s_M)c_M\Trans +
    c_M(y_M-B_{M-1}s_M)\Trans}{c_M\Trans s_M}\\\notag
  &-
  \frac{c_M s_M\Trans(y_M-B_{M-1}s_M) c_M\Trans}{(c_M\Trans
    s_M)^2}\qquad\text{for } c_M=Ws_{M}.
\end{align}
We first harmonize the notation between the two formulations by
writing the elements of the diagonal Gram matrix as $(S\Trans W
S)_{ij} = \delta_{ij} c\Trans _i s_i \ec \delta_{ij}a_i$. With this
notation, the posterior mean $B_M$, Equation (\ref{eq:24}), can be
written as
\begin{align}
  \label{eq:26}
  B_M = B_0 + \sum_{i=1} ^M \frac{\Delta_{i}c_i\Trans + c_i\Delta_{i}
    \Trans}{a_i} + \sum_{i=1} ^M\sum_{j=1} ^M \frac{c_i[\Delta\Trans
    S]_{ij} c_j\Trans}{a_ia_j},
\end{align}
which can be written recursively as 
\begin{align}
  \label{eq:28}
  B_M &= B_{M-1} + \frac{\Delta_Mc_M \Trans + c_M\Delta_M \Trans}{a_M}
  \\\notag &-\sum_{i=1} ^{M-1} \frac{c_M[\Delta\Trans S]_{Mi} c_i
    \Trans+ c_i[\Delta\Trans S]_{iM}c_M\Trans}{a_M a_i}
  -\frac{c_M[\Delta\Trans S]_{MM}c_M \Trans}{a_Ma_M}\\
  \label{eq:29}
  &= B_{M-1} + \left(\Delta_M - \sum_{i=1}
    ^{M-1}\frac{c_i(\Delta\Trans S)_{iM}}{a_i} \right)\frac{c_M
    \Trans}{a_M} + \frac{y_M}{a_M}\left(\Delta_M - \sum_{i=1}
    ^{M-1}\frac{c_i(\Delta\Trans S)_{iM}}{a_i} \right)\\\notag
  &\qq-\frac{c_M(\Delta\Trans S)_{MM}c_M \Trans}{a_M ^2}.
\end{align}
On the other hand, the expression $y_M - B_{M-1}s_M$ from Equation
(\ref{eq:25}) can be written using Equation (\ref{eq:26}) as
\begin{align}
  \label{eq:27}
  y_M - B_{M-1}s_M &= y_M - B_{0}s_M - \sum_{i=1} ^{M-1}
  \frac{\Delta_ic_i\Trans s_M + c_i \Delta_i\Trans s_M}{a_i} + 
  \sum_{i=1}^{M-1} \sum_{j=1} ^{M-1} \frac{c_i(\Delta\Trans
    S)_{ij}c_j\Trans s_M}{a_ia_j}.
\end{align}
But since, by assumption, $c_i\Trans s_M=0$ for $i\neq M$, this
expression simplifies to
\begin{align}
  \label{eq:30}
  y_M - B_{M-1}s_M &= y_M - B_{0}s_M - \sum_{i=1} ^{M-1}
  \frac{c_i \Delta_i\Trans s_M}{a_i} = \Delta_M - \sum_{i=1} ^{M-1}
  \frac{c_i \Delta_i\Trans s_M}{a_i}.
\end{align}
Similarly, the expression $s_M\Trans(y_M-B_{M-1}s_M)$ from Equation
(\ref{eq:25}) simplifies to 
\begin{align}
  \label{eq:31}
  s_M\Trans(y_M-B_{M-1}s_M) &= s_M \Trans y_M - s_M\Trans B_0s_M
  -\sum_{i} ^{M-1} \frac{s_M\Trans (\Delta_ic_i\Trans +
    c_i\Delta_i\Trans ) s_M}{a_i}\\\notag &\qq - \sum_i ^{M-1}\sum_j
  ^{M-1}\frac{s_M\Trans c_i [\Delta\Trans S]_{ij}c_j\Trans
    s_M}{a_ia_j} = s_M\Trans \Delta_M.
\end{align}
Reinserting these expressions into Equation (\ref{eq:25}), we see that
it equals Equation (\ref{eq:29}), which completes the proof.$\qquad\endproof$

\subsection{Proof for Lemma \ref{lem:choice-hyperp-1}}
\label{sec:proof-lemma-hyperpchoice2}
The DFP update is the direct update with the choice $W=B$; and the
BFGS update is the inverse update with the choice $W = H$. So the Gram
matrix, in both cases, is $S\Trans B S = Y\Trans H Y = S\Trans Y$. The
$i,j$-th element of this symmetric $M\times M$ matrix is $y_i\Trans
s_j$. The statement to be shown is that this matrix is diagonal if the
line search directions are chosen as
\begin{equation}
  \label{eq:239}
  s_{i+1} = -\alpha_{i+1} H_{i+1} F_{i}.
\end{equation}
with the residual (the gradient of the equivalent quadratic
optimization objective) $F_i=Bx_i - b$. We also assume perfect line
searches. First, consider the special case where $j = i+1$
(i.e. subsequent line searches). Because they are in the Dennis class,
the estimates for $H$ (irrespective of whether they were constructed
by inverting a direct estimate or using an inverse estimate
directly) fulfill the `quasi-Newton equation' $s_i = H_{i+1}y_i =
H_{i+1}(F_i - F_{i-1})$. Thus
  \begin{equation}
    \label{eq:240}
    s_{i+1} = -\alpha_{i+1} (s_i + H_{i+1} F_{i-1}),
  \end{equation}
  The exact line search along $s_i$ ended when $s_i\Trans F_i = 0$, so
  \begin{align}
    \label{eq:241}
    y_i\Trans s_{i+1} &= -\alpha_{i+1} (F_i - F_{i-1})\Trans(s_i +
    H_{i+1}F_{i-1}) = -\alpha_{i+1}(y_i\Trans H_{i+1}F_{i-1} -
    s_i\Trans F_{i-1})\\\notag &= -\alpha_{i+1}(s_i\Trans F_{i-1}-
    s_i\Trans F_{i-1}) = 0
  \end{align}
  (the last line follows again because, by the quasi-Newton equation, $s_i =
  H_{j}y_i$ for all $j>i$). By symmetry of the Gram matrix, Eq.~(\ref{eq:241})
  also implies $y_{i+1}\Trans s_i=0$. We complete the proof inductively: Let
  $j>i+1$ or $i>j+1$, and assume $y_{i}\Trans s_{j-a}= y_{j-a}\Trans
  s_{i}=0\;\forall a>0$. Also, $F_{j-1}$ can be written with a telescoping sum
  as
  \begin{equation}
    \label{eq:243}
    F_{j-1} = (F_{j-1} - F_{j-2} + F_{j-2} - F_{j-3} + \dots - F_{i} +
    F_{i}) = \sum_{a = i} ^{j-1} y_a + F_{i}.
  \end{equation}
  Hence
  \begin{xalignat}{2}
    \label{eq:242}
    y_i\Trans s_j &= -\alpha_jy_i\Trans(s_{j-1} +
    H_{j}F_{j-1})&&\text{[by definition of Newton's direction]}\\
    &= -\alpha_j (0 + y_i \Trans H_{j}F_{j-1}) &&\text{[by induction hypothesis]}\\
    &= -\alpha_j s_i\Trans F_{j-1} &&\text{[by quasi-Newton property]}\\
    &= -\alpha_j s_i\Trans \left[\sum_{a={j-1}} ^i y_a + F_{i}
    \right]&&\text{[by Eq.~(\ref{eq:243})]}\\
    &= -\alpha_j s_i\Trans F_i &&\text{[by induction hypothesis]}\\
    &= 0 &&\text{[because $i$-th line search is exact]}
  \end{xalignat}
  This completes the proof. $\qq\endproof$
  
  \paragraph{Remark}
  \label{sec:remark-2}
  This also implies $F_i\Trans s_j=0$ for $i\neq j$: Assume w.l.o.g.~that
  $i>j$. Then use the telescoping sum of Equation (\ref{eq:243}) to get
  \begin{equation}
    \label{eq:32}
    0 = y_i\Trans s_j = (F_i - F_{i-1})\Trans s_j = (F_i - \sum_{a=j}
    ^{i-1} y_a - F_j)\Trans s_j = F_i\Trans s_j
  \end{equation}

\subsection*{Acknowledgments}
\label{sec:acknowledgments}

The author would like to thank Maren Mahsereci and Martin Kiefel for helpful
discussions both prior to and during the preparation of this manuscript. He is
particularly grateful to Maren Mahsereci for helpful discussions about details
of the symmetric basis in Theorem \ref{thm:symm-hypoth-class-1}. In addition,
the author is thankful for comments from two anonymous reviewers, particularly
for pointing out relevant previous work.

\bibliographystyle{siam}
\bibliography{../../../bibfile}

\end{document}